\numberwithin{equation}{section}
\numberwithin{figure}{section}
\theoremstyle{plain}
\newtheorem{thm}{\protect\theoremname}[section]
  \theoremstyle{definition}
  \newtheorem{example}[thm]{\protect\examplename}
  \theoremstyle{definition}
  \newtheorem{defn}[thm]{\protect\definitionname}
  \theoremstyle{remark}
  \newtheorem{rem}[thm]{\protect\remarkname}
  \theoremstyle{plain}
  \newtheorem{prop}[thm]{\protect\propositionname}
  \theoremstyle{plain}
  \newtheorem{cor}[thm]{\protect\corollaryname}
  \theoremstyle{remark}
  \newtheorem*{acknowledgement*}{\protect\acknowledgementname}
  \theoremstyle{plain}
  \newtheorem{lem}[thm]{\protect\lemmaname}
\newcommand{\e}{\mathrm e}
\newcommand{\1}{\mathbbm{1}}
\newcommand{\N}{\mathbb{N}}
\newcommand{\Z}{\mathbb{Z}}
\newcommand{\R}{\mathbb{R}}
\renewcommand{\Pi}{\pi}
\renewcommand{\hat}{\widehat}
\newcommand\id{\mathrm{id}}
\DeclareMathOperator*{\A}{\mathit{A}}
  \providecommand{\acknowledgementname}{Acknowledgement}
  \providecommand{\corollaryname}{Corollary}
  \providecommand{\definitionname}{Definition}
  \providecommand{\examplename}{Example}
  \providecommand{\lemmaname}{Lemma}
  \providecommand{\propositionname}{Proposition}
  \providecommand{\remarkname}{Remark}
\providecommand{\theoremname}{Theorem}
\begin{document}

\title{Group Extended Markov Systems, Amenability, and the Perron-Frobenius
Operator}

\author{Johannes Jaerisch}

\thanks{The author was supported by the research fellowship JA 2145/1-1 of
the German Research Foundation (DFG)}

\address{Department of Mathematics, Graduate School of Science Osaka University,
1-1 Machikaneyama Toyonaka, Osaka, 560-0043 Japan }

\email{jaerisch@cr.math.sci.osaka-u.ac.jp}

\subjclass[2000]{Primary 37D35, 43A07, 37C30; Secondary 37C85. }

\keywords{Thermodynamic formalism; group extension; amenability; Perron-Frobenius
operator; \foreignlanguage{english}{Gurevi\v c pressure.}}
\begin{abstract}
We characterise amenability of a countable group in terms of the spectral
radius of the Perron-Frobenius operator associated to a group extension
of a countable Markov shift and a H\"older continuous potential.
This extends a result of Day for random walks and recent work of Stadlbauer
for dynamical systems. Moreover, we show that, if the potential satisfies
a symmetry condition with respect to the group extension, then the
logarithm of the spectral radius of the Perron-Frobenius operator
is given by the \foreignlanguage{english}{Gurevi\v c} pressure of
the potential. 
\end{abstract}
\maketitle

\section{Introduction and Statement of Results}

Kesten (\cite{MR0112053}) characterised amenability of a countable
group $G$ by  the growth of the return probability of a symmetric
random walk. Day (\cite{MR0159230}) gave a criterion for amenability
in terms of the spectrum of a convolution operator acting on the Banach
space $\ell^{p}\left(G\right)$, $p\in\N$, without assuming the random
walk to be symmetric. Recently, the relationship between amenability
and dynamical systems was studied in the framework of the thermodynamic
formalism (\cite{MR2322540,Jaerisch11a,Stadlbauer11}). Stadlbauer
(\cite{Stadlbauer11}) used the \foreignlanguage{english}{Gurevi\v c}
pressure of a H\"older continuous potential (\cite{MR1738951}) to
give an extension of Kesten's criterion for amenability to group extensions
of Markov shifts. To characterise amenability via the \foreignlanguage{english}{Gurevi\v c}
pressure, it is necessary to impose certain symmetry assumptions on
the potential and on the Markov shift. 

In this paper, we characterise amenability in terms of the spectral
radius of the Perron-Frobenius operator associated to a group extension
of a countable Markov shift and a H\"older continuous potential,
where neither the group extension nor the potential is assumed to
be symmetric. Another main result is to relate the spectral radius
of the Perron-Frobenius operator to the \foreignlanguage{english}{Gurevi\v c}
pressure.

Throughout this paper, let $\Sigma$ denote a Markov shift with countable
alphabet $I$ and left shift map $\sigma:\Sigma\rightarrow\Sigma$
(see Section 2). For a countable group $G$ and a semigroup homomorphism
$\Psi:I^{*}\rightarrow G$, where $I^{*}$ denotes the free semigroup
generated by $I$, the \emph{group extended Markov system} $\left(\Sigma\times G,\sigma\rtimes\Psi\right)$
is given by 
\[
\sigma\rtimes\Psi:\Sigma\times G\rightarrow\Sigma\times G,\,\,\left(\sigma\rtimes\Psi\right)\left(\omega,g\right):=\left(\sigma\left(\omega\right),g\Psi\left(\omega_{1}\right)\right),\,\,\left(\omega,g\right)\in\Sigma\times G.
\]
Let $\pi_{1}:\Sigma\times G\rightarrow\Sigma$ denote the canonical
projection. If $\Sigma$ is topologically mixing and satisfies the
big images and preimages (b.i.p.) property (see Definition \ref{mixing-definitions})
and if $\varphi:\Sigma\rightarrow\R$ is Hölder continuous with finite
\foreignlanguage{english}{Gurevi\v c} pressure $\mathcal{P}\left(\varphi,\sigma\right)$
 (see Definition \ref{def:gurevich}), then the Perron-Frobenius operator
$\mathcal{L}_{\varphi\circ\pi_{1}}$, given by $\mathcal{L}_{\varphi\circ\pi_{1}}\left(f\right)\left(x\right):=\sum_{\left(\sigma\rtimes\Psi\right)\left(y\right)=x}\e^{\varphi\circ\pi_{1}\left(y\right)}f\left(y\right)$,
acts as a bounded linear operator on a certain Banach space $\left(\mathcal{H}_{\infty},\vert\cdot\vert_{\infty}\right)$
of functions $f:\Sigma\times G\rightarrow\R$ (\cite{Stadlbauer11}).
We refer to Section 3 for the definition of $\left(\mathcal{H}_{\infty},\vert\cdot\vert_{\infty}\right)$
and denote by $\rho(\mathcal{L}_{\varphi\circ\pi_{1}})$ the spectral
radius of $\mathcal{L}_{\varphi\circ\pi_{1}}:\mathcal{H}_{\infty}\rightarrow\mathcal{H}_{\infty}$.

Our main result is the following characterisation of amenability for
group extended Markov systems. 
\begin{thm}
\label{thm:amenable-spectralradius-equivalence}Let $\Sigma$ be a
topologically mixing Markov shift with the b.i.p. property and let
$\left(\Sigma\times G,\sigma\rtimes\Psi\right)$ be an irreducible
group extended Markov system. Let $\varphi:\Sigma\rightarrow\R$ be
Hölder continuous with \foreignlanguage{english}{\textup{$\mathcal{P}\left(\varphi,\sigma\right)<\infty$}}.
Then we have $\log\rho\left(\mathcal{L}_{\varphi\circ\pi_{1}}\right)=\mathcal{P}\left(\varphi,\sigma\right)$
if and only if $G$ is amenable. In general, we have $\log\rho\left(\mathcal{L}_{\varphi\circ\pi_{1}}\right)\le\mathcal{P}\left(\varphi,\sigma\right)$.
\end{thm}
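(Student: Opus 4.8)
The strategy is to identify $\log\rho(\mathcal L_{\varphi\circ\pi_1})$ with the exponential growth rate of the partition functions of the group extension that return to the neutral element $e\in G$, and to compare this rate with the Gurevi\v{c} pressure, which records the growth of the \emph{full} base partition functions. Amenability is precisely the condition under which the two rates agree, via a Kesten--Day dichotomy. Concretely, unfolding the $n$-th iterate of the operator shows that the fibre over $(\omega,g)$ is indexed by admissible words $\tau$ of length $n$ with group displacement $\Psi(\tau)$, so that evaluating $\mathcal L_{\varphi\circ\pi_1}^{n}$ on suitable elements of $\mathcal H_\infty$ yields weighted sums $\sum_\tau \e^{S_n\varphi(\tau\omega)}$ sorted by $\Psi(\tau)$. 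Writing $Z_n(g):=\sum_{\Psi(\tau)=g}\e^{S_n\varphi}$ for the base loops of length $n$ with displacement $g$, the neutral fibre $Z_n(e)$ governs $\rho(\mathcal L_{\varphi\circ\pi_1})$, whereas $\sum_{g}Z_n(g)$ governs $\e^{n\mathcal P(\varphi,\sigma)}$.

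For the unconditional upper bound I would normalise against the Ruelle--Perron--Frobenius data of the base. Topological mixing together with the b.i.p.\ property furnish (Sarig, Mauldin--Urba\'{n}ski) a simple leading eigenvalue $\e^{\mathcal P(\varphi,\sigma)}$ with a strictly positive eigenfunction $h$ and a conformal measure $\nu$; conjugating $\e^{-\mathcal P(\varphi,\sigma)}\,h^{-1}\mathcal L_\varphi(h\,\cdot)$ turns the base part into a stochastic operator. Under this normalisation $\mathcal L_{\varphi\circ\pi_1}$ becomes $\e^{\mathcal P(\varphi,\sigma)}$ times a Markov operator whose fibre weights define a probability distribution on $G$. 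Since summing $Z_n(g)$ over all $g$ merely reconstructs the full base partition function, the neutral sub-sum satisfies $Z_n(e)\le\sum_g Z_n(g)$, whence $\log\rho(\mathcal L_{\varphi\circ\pi_1})\le\mathcal P(\varphi,\sigma)$ in all cases, and moreover $\rho(\mathcal L_{\varphi\circ\pi_1})=\e^{\mathcal P(\varphi,\sigma)}\cdot\rho_G$, where $\rho_G$ is the spectral radius of the induced convolution-type operator on $G$.

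It then remains to show that $\rho_G=1$ precisely when $G$ is amenable---this is the non-symmetric Day criterion, which is available because neither symmetry of $\Psi$ nor of $\varphi$ is required. For the amenable direction I would insert a F\o lner sequence $(F_k)_k$ into the test functions, building $f_k$ close to $\1_{\Sigma\times F_k}$; the almost-invariance $\lvert gF_k\,\triangle\,F_k\rvert/\lvert F_k\rvert\to0$ forces $\mathcal L_{\varphi\circ\pi_1}^{n}f_k$ to recover the full base growth $\e^{n\mathcal P(\varphi,\sigma)}$ up to a subexponential error, giving $\log\rho(\mathcal L_{\varphi\circ\pi_1})\ge\mathcal P(\varphi,\sigma)$ and hence equality. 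Conversely, if $G$ is non-amenable, Day's spectral gap yields $\rho_G<1$ and therefore the strict inequality $\log\rho(\mathcal L_{\varphi\circ\pi_1})<\mathcal P(\varphi,\sigma)$.

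The main obstacle is twofold. First, the fibre transition weights depend on the base point $\omega$, so the induced operator on $G$ is a random walk in a random environment rather than a genuine convolution; reducing it to a fixed group convolution---to which Day's theorem applies---requires integrating out the base against the Gibbs measure and controlling the fluctuations through the exponential mixing (spectral gap) of the base operator, with irreducibility of the extension ensuring that every displacement $g$ is reachable. Second, one must verify that the spectral radius computed on the H\"older/variation space $(\mathcal H_\infty,\lvert\cdot\rvert_\infty)$ coincides with the group-theoretic spectral radius $\rho_G$, and that the F\o lner test functions are admissible in $\lvert\cdot\rvert_\infty$; here the uniform spectral gap of the base is again the decisive input.
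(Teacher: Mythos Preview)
Your second and third paragraphs are essentially the paper's approach: normalise by the eigenfunction so that $\mathcal L_\varphi(\1)=\1$, observe that the resulting operator on the $G$-fibres is a genuine convolution operator, and invoke Day for the ``amenable $\Rightarrow$ equality'' direction. The paper makes this precise by introducing the averaging operator $A:\mathcal H_\infty\to\mathcal H_c\cong\ell^2(G)$ and the operators $T_n:=A\mathcal L_{\varphi\circ\pi_1}^n|_{\mathcal H_c}$; the Gibbs property alone gives $\|T_n\|\le\|\mathcal L_{\varphi\circ\pi_1}^n\|\le C\|T_n\|$, so no spectral gap or exponential mixing of the base is needed to handle your ``random environment'' obstacle. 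After averaging against $\mu_\varphi$, $T_n$ is exactly right convolution by the probability $g\mapsto\mu_\varphi(\{x:\Psi(x_{|n})=g\})$, and Day's theorem yields $\|T_n\|=1$ whenever $G$ is amenable. For the converse direction the paper does not appeal to Day but to Stadlbauer's dynamical criterion: $\lim_n\Lambda_n^{1/n}=1$ forces amenability.

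There is, however, a genuine conceptual slip in your first paragraph that would derail the argument if you followed it. You claim that the neutral fibre $Z_n(e)=\sum_{\Psi(\tau)=e}\e^{S_n\varphi}$ governs $\rho(\mathcal L_{\varphi\circ\pi_1})$. It does not: the growth rate of $Z_n(e)$ is the Gurevi\v{c} pressure $\mathcal P(\varphi\circ\pi_1,\sigma\rtimes\Psi)$ of the \emph{extension}, which is in general strictly smaller than $\log\rho(\mathcal L_{\varphi\circ\pi_1})$. The paper's Example~1.2 (biased walk on $\Z$) shows this concretely, and Proposition~1.5 is devoted precisely to the gap between these two quantities. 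The spectral radius is computed via Gelfand as $\lim_n\|\mathcal L_{\varphi\circ\pi_1}^n\|^{1/n}$, which corresponds to the operator norm of the convolution $T_n$ on $\ell^2(G)$, not to the diagonal matrix coefficient $(T_n\1_{\{\id\}},\1_{\{\id\}})$. For non-symmetric measures these differ, and that difference is the entire point of using Day's criterion rather than Kesten's. Your later paragraphs implicitly correct this by speaking of $\rho_G$ as an operator spectral radius, but you should drop the $Z_n(e)$ heuristic entirely.
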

Theorem \ref{thm:amenable-spectralradius-equivalence} provides an
extension of a result of Day for random walks on groups (\cite[Theorem 1]{MR0159230}).
The Perron-Frobenius operator corresponds to the convolution operator
acting on $\ell^{2}\left(G\right)$ in Day's setting. Theorem \ref{thm:amenable-spectralradius-equivalence}
also generalises \cite[Theorem 3.21]{Jaerisch11a}, where locally
constant potentials were considered (see Remark \ref{tams-comments}
for details). 

Stadlbauer (\cite{Stadlbauer11}) gave a criterion for amenability
in terms of the \foreignlanguage{english}{Gurevi\v c} pressure\foreignlanguage{english}{,
which is more in the spirit of }Kesten's characterisation of amenability
in terms of the growth rate of return probabilities (see \cite{MR0109367,MR0112053}).
More precisely, Stadlbauer proved the following two implications.
Here, $\mathcal{P}\left(\varphi\circ\pi_{1},\sigma\rtimes\Psi\right)$
refers to the \foreignlanguage{english}{Gurevi\v c} pressure of $\varphi\circ\pi_{1}$
with respect to $\sigma\rtimes\Psi$. 
\begin{enumerate}
\item \label{enu:stadlbauer-fullpressure-amenable} Under the assumptions
of Theorem \ref{thm:amenable-spectralradius-equivalence}, if $\mathcal{P}\left(\varphi\circ\pi_{1},\sigma\rtimes\Psi\right)=\mathcal{P}\left(\varphi,\sigma\right)$,
then $G$ is amenable (\cite[Theorem 5.4]{Stadlbauer11}).
\item \label{enu:stadlbauer-amenablesymmetric-fullpressure}Let $\Sigma$
be topologically mixing and let $\left(\Sigma\times G,\sigma\rtimes\Psi\right)$
be a symmetric group extension. If $G$ is amenable and if $\varphi$
is weakly symmetric with $\mathcal{P}\left(\varphi,\sigma\right)<\infty$,
then $\mathcal{P}\left(\varphi\circ\pi_{1},\sigma\rtimes\Psi\right)=\mathcal{P}\left(\varphi,\sigma\right)$
(\cite[Theorem 4.1]{Stadlbauer11}).
\end{enumerate}
Comparing Stadlbauer's results with Theorem \ref{thm:amenable-spectralradius-equivalence},
we note that the spectral radius of the Perron-Frobenius operator
can be used to characterise amenability for an arbitrary Hölder continuous
potential. In contrast to this, the criterion in terms of the \foreignlanguage{english}{Gurevi\v c}
pressure\foreignlanguage{english}{ involves a certain symmetry condition
on the} Markov shift and the potential under consideration (see \cite[p.455]{Stadlbauer11}
for the definition of a symmetric group extension and a weakly symmetric
potential). For an amenable group, it may happen that $\mathcal{P}\left(\varphi\circ\pi_{1},\sigma\rtimes\Psi\right)<\mathcal{P}\left(\varphi,\sigma\right)$,
if $\varphi$ is not weakly symmetric. In fact, the gap between these
pressures can be arbitrarily large, as the following example illustrates.
\begin{example}
Let $I:=\left\{ \pm1\right\} $ and consider the full shift $\Sigma:=I^{\N}$
and the group extended Markov system given by the canonical semigroup
homomorphism $\Psi:I^{*}\rightarrow\left(\Z,+\right)$. For a real
number $\lambda>0$, let $\varphi_{\lambda}:\Sigma\rightarrow\R$
be given by $\varphi_{\lambda}\left(x\right):=\lambda x_{1}$. Then
we have $\mathcal{P}\left(\varphi_{\lambda}\circ\pi_{1},\sigma\rtimes\Psi\right)=\log(2)<\log(\e^{\lambda}+\e^{-\lambda})=\mathcal{P}\left(\varphi_{\lambda},\sigma\right)$. 
\end{example}
For a Markov shift with a finite alphabet, it is well-known that the
\foreignlanguage{english}{Gurevi\v c} pressure of a Hölder continuous
potential coincides with the logarithm of the spectral radius of the
Perron-Frobenius operator. However, if the Markov shift is constructed
over an infinite alphabet, then the \foreignlanguage{english}{Gurevi\v c}
pressure is less or equal to the logarithm of the spectral radius.
The \foreignlanguage{english}{Gurevi\v c} pressure describes the
growth of iterates of the Perron-Frobenius operator on functions supported
on a cylindrical set, whereas for the spectral radius, we have to
consider functions supported on the whole space (see Lemma \ref{lem:growth-is-spectralradius}).
To relate the spectral radius of the Perron-Frobenius operator to
the \foreignlanguage{english}{Gurevi\v c} pressure, we\foreignlanguage{english}{
introduce the following notions of symmetry, }generalizing\foreignlanguage{english}{
those given in }\cite[Definition 3.10]{Jaerisch11a}.
\begin{defn}
Let $\left(\Sigma\times G,\sigma\rtimes\Psi\right)$ be a group extended
Markov system and $\varphi:\Sigma\rightarrow\R$. Let $\alpha\ge1$.
We say that $\varphi$ is \emph{asymptotically $\alpha$-symmetric
with respect to $\Psi$, }if there exist $n_{0}\in\N$ and sequences
$\left(c_{n}\right)\in\left(\R^{+}\right)^{\N}$ and $\left(N_{n}\right)\in\N^{\N}$
with the property that $\lim_{n}\left(c_{n}\right)^{1/(2n)}=\alpha$,
$\lim_{n}n^{-1}N_{n}=0$ and such that, for each $g\in G$ and for
all $n\ge n_{0}$, we have 
\begin{equation}
\sum_{\omega\in\Sigma^{n}:\Psi\left(\omega\right)=g}\e^{\sup S_{n}\varphi_{|\left[\omega\right]}}\le c_{n}\sum_{\omega\in\Sigma^{*}:\Psi\left(\omega\right)=g^{-1},\, n-N_{n}\le\left|\omega\right|\le n+N_{n}}\e^{\sup S_{\left|\omega\right|}\varphi_{|\left[\omega\right]}}.\label{eq:asympt-symmetric}
\end{equation}
We say that $\varphi$ is \emph{compactly asymptotically $\alpha$-symmetric
with respect to $\Psi$ }if there exists a sequence $\left(\Sigma_{k}\right)_{k\in\N}$
of topologically mixing subshifts of $\Sigma$ with finite alphabet
$I_{k}\subset I$, such that $\bigcup_{k\in\N}I_{k}=I$, the set $\Psi\left(\Sigma_{k}^{*}\right)$
is a subgroup of $G$, and $\varphi_{|\Sigma_{k}}$ is asymptotically
$\alpha$-symmetric with respect to $\Psi_{|I_{k}^{*}}$. If $\alpha$
is not specified, then we will tacitly assume that $\alpha=1$. \end{defn}
\begin{rem}
\label{rem:symmetry-coboundary}If $\varphi$ is (compactly) asymptotically
$\alpha$-symmetric with respect to $\Psi$, then so is $\varphi+\log h-\log h\circ\sigma+P$,
for each $P\in\R$ and for each function $h:\Sigma\rightarrow\R^{+}$,
which is bounded away from zero and infinity. 
\end{rem}
The following proposition generalises \cite[Corollary 3.17]{Jaerisch11a},
where a locally constant potential $\varphi$ on a finite state Markov
shift was considered (see Remark \ref{tams-comments} for details).
The proposition relates Theorem \ref{thm:amenable-spectralradius-equivalence}
to Stadlbauer's results and is also of independent interest, since
it relates the \foreignlanguage{english}{Gurevi\v c} pressure of
a not necessarily recurrent potential (\cite{MR1818392}) on $\Sigma\times G$
to the spectrum of the Perron-Frobenius operator. 
\begin{prop}
\label{prop:symmetry-pressureisspectralradius}Under the assumptions
of Theorem \ref{thm:amenable-spectralradius-equivalence}, we have
$\mathcal{P}\left(\varphi\circ\pi_{1},\sigma\rtimes\Psi\right)\le\log\rho\left(\mathcal{L}_{\varphi\circ\pi_{1}}\right)$.
If additionally $\varphi$ is asymptotically $\alpha$-symmetric with
respect to $\Psi$, for some $\alpha\ge1$, then 
\[
\mathcal{P}\left(\varphi\circ\pi_{1},\sigma\rtimes\Psi\right)\ge\log\rho\left(\mathcal{L}_{\varphi\circ\pi_{1}}\right)-\log\alpha.
\]

\end{prop}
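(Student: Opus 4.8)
The plan is to rewrite both quantities as growth rates of partition functions indexed by $G$ and then to run a Day-type $\ell^{2}(G)$-argument in which the asymptotic symmetry plays the role of self-adjointness. By Remark \ref{rem:symmetry-coboundary} I may first replace $\varphi$ by $\varphi+\log h-\log h\circ\sigma-\mathcal{P}(\varphi,\sigma)$, where $h$ is the eigenfunction of the Perron--Frobenius operator of $\varphi$ on $\Sigma$; this preserves asymptotic $\alpha$-symmetry and lowers both $\mathcal{P}(\varphi\circ\pi_{1},\sigma\rtimes\Psi)$ and $\log\rho(\mathcal{L}_{\varphi\circ\pi_{1}})$ by the same constant $\mathcal{P}(\varphi,\sigma)$, so the two asserted inequalities are unchanged and I may assume the base pressure is $0$. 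Fixing a symbol $a$ in the finite set furnished by the b.i.p. property, I set
\[
Z_{n}(g):=\sum_{\omega\in\Sigma^{n}:\,\omega_{1}=a,\,\Psi(\omega)=g}\e^{\sup S_{n}\varphi_{|[\omega]}},\qquad n\in\N,\ g\in G.
\]
An admissible loop of length $n$ through $(a,e)$ in $\Sigma\times G$ is exactly an $\omega\in\Sigma^{n}$ with $\omega_{1}=a$ and $\Psi(\omega)=e$, so $\mathcal{P}(\varphi\circ\pi_{1},\sigma\rtimes\Psi)=\lim_{n}\frac{1}{n}\log Z_{n}(e)$. On the other hand $\mathcal{L}_{\varphi\circ\pi_{1}}^{n}(\1_{[a,e]})(\xi,g)=\sum_{\omega_{1}=a,\,\Psi(\omega)=g}\e^{S_{n}\varphi(\omega\xi)}$, which by H\"older distortion and the b.i.p. property is comparable, uniformly in $n$ and $\xi$, to $g\mapsto Z_{n}(g)$; feeding this into Lemma \ref{lem:growth-is-spectralradius} and using the $\ell^{2}(G)$-structure of $(\mathcal{H}_{\infty},|\cdot|_{\infty})$ gives $\log\rho(\mathcal{L}_{\varphi\circ\pi_{1}})=\lim_{n}\frac{1}{n}\log\big(\sum_{g\in G}Z_{n}(g)^{2}\big)^{1/2}$. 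Since $Z_{n}(e)\le\big(\sum_{g}Z_{n}(g)^{2}\big)^{1/2}$, comparing growth rates yields the first assertion $\mathcal{P}(\varphi\circ\pi_{1},\sigma\rtimes\Psi)\le\log\rho(\mathcal{L}_{\varphi\circ\pi_{1}})$.

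For the reverse inequality I bound the $\ell^{2}$-growth by the diagonal growth. Applying asymptotic $\alpha$-symmetry to one of the two factors in $\sum_{g}Z_{n}(g)^{2}$ gives
\[
\sum_{g\in G}Z_{n}(g)^{2}\le c_{n}\sum_{m:\,|m-n|\le N_{n}}\ \sum_{g\in G}Z_{n}(g)\,Z_{m}(g^{-1}).
\]
Each inner sum is, up to a bounded factor, a diagonal return of length close to $n+m$: a length-$n$ word with $\Psi(\omega)=g$ followed by a length-$m$ word with $\Psi(\omega')=g^{-1}$ concatenates to a loop through $(a,e)$, and the semigroup property $\mathcal{L}^{n+m}=\mathcal{L}^{n}\mathcal{L}^{m}$ together with the b.i.p. property shows that $\sum_{g}Z_{n}(g)Z_{m}(g^{-1})$ is comparable to $Z_{n+m}(e)$. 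Thus the right-hand side is at most $C\,c_{n}(2N_{n}+1)\max_{|m-n|\le N_{n}}Z_{n+m}(e)$. Taking $\frac{1}{2n}\log$ and letting $n\to\infty$, the factor $c_{n}$ contributes $\log\alpha$ since $c_{n}^{1/(2n)}\to\alpha$, the factor $2N_{n}+1$ contributes $0$ since $N_{n}/n\to0$, and $\frac{1}{2n}\log Z_{n+m}(e)\to\mathcal{P}(\varphi\circ\pi_{1},\sigma\rtimes\Psi)$ because $(n+m)/(2n)\to1$; the left-hand side converges to $\log\rho(\mathcal{L}_{\varphi\circ\pi_{1}})$. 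This yields $\log\rho(\mathcal{L}_{\varphi\circ\pi_{1}})\le\mathcal{P}(\varphi\circ\pi_{1},\sigma\rtimes\Psi)+\log\alpha$, as claimed.

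I expect the main obstacle to be the comparison $\sum_{g}Z_{n}(g)Z_{m}(g^{-1})\asymp Z_{n+m}(e)$ that converts the $\ell^{2}(G)$-quantity back into a genuine Gurevi\v{c}-pressure diagonal return. The admissible concatenations inject into length-$(n+m)$ loops with comparable weight, but the inadmissible pairs must be absorbed by inserting bounded connecting words supplied by the b.i.p. property, and such an insertion conjugates the intermediate group element; this is precisely why the definition of asymptotic symmetry sums over the length window $[n-N_{n},n+N_{n}]$ rather than a single length, and why irreducibility is needed to compare the returns $Z_{N}(s)$ for different $s\in G$. Controlling these distortions uniformly in $n$ and $g$, so that they disappear after taking $\frac{1}{2n}\log$, is the technical heart of the argument; in the symmetric case $\alpha=1$ it reduces to the familiar fact that for a self-adjoint convolution operator on $\ell^{2}(G)$ the spectral radius equals the exponential decay rate of the return probabilities.
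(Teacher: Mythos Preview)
Your reduction to $\mathcal{P}(\varphi,\sigma)=0$ and the first inequality are fine, and the concatenation estimate $\sum_{g}Z_{n}(g)Z_{m}(g^{-1})\lesssim Z_{n+m+O(1)}(e)$ is indeed available from b.i.p.\ and irreducibility (this is the paper's estimate (\ref{eq:gerltrick-4})). The real gap is earlier: the identity
\[
\log\rho(\mathcal{L}_{\varphi\circ\pi_{1}})=\lim_{n}\tfrac{1}{n}\log\Big(\sum_{g}Z_{n}(g)^{2}\Big)^{1/2}
\]
does \emph{not} follow from Lemma~\ref{lem:growth-is-spectralradius}. That lemma gives $\rho(\mathcal{L}_{\varphi\circ\pi_{1}})=\lim_{n}\Vert T_{n}\Vert^{1/n}$, where $\Vert T_{n}\Vert$ is the \emph{operator} norm on $\mathcal{H}_{c}\cong\ell^{2}(G)$. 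Your quantity $(\sum_{g}Z_{n}(g)^{2})^{1/2}$ is (up to Gibbs constants) $\big|T_{n}\1_{\Sigma\times\{\id\}}\big|_{\infty}$, the norm of the image of a single vector, and one only has $\big|T_{n}\1_{\Sigma\times\{\id\}}\big|_{\infty}\le\Vert T_{n}\Vert$. So your symmetry-plus-concatenation step bounds $\big|T_{n}\1_{\Sigma\times\{\id\}}\big|_{\infty}^{2}$, not $\Vert T_{n}\Vert^{2}$, and therefore says nothing yet about $\rho(\mathcal{L}_{\varphi\circ\pi_{1}})$. Since $T_{n}$ is a non-self-adjoint convolution operator, there is no a priori reason for the two growth rates to coincide --- establishing this is precisely the content of the second assertion, not an input to it.

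The paper closes this gap with the Pier--Gerl device: for each fixed $n$ one passes to the self-adjoint operator $T_{n}^{*}T_{n}$, for which the spectral radius \emph{is} recovered from a single vector, namely $\Vert T_{n}\Vert^{2}=\Vert T_{n}^{*}T_{n}\Vert=\limsup_{k}\big((T_{n}^{*}T_{n})^{k}\1_{\Sigma\times\{\id\}},\1_{\Sigma\times\{\id\}}\big)^{1/k}$. Asymptotic $\alpha$-symmetry is then used to replace each $T_{n}^{*}$ by $c_{n}C^{2}\sum_{|i|\le N_{n}}T_{n+i}$ (this is (\ref{eq:gerltrick-3})), after which the $2k$-fold product is collapsed via (\ref{eq:gerltrick-4}) into a single $T_{2nk+O(k)}$ applied to $\1_{\Sigma\times\{\id\}}$; only now does Lemma~\ref{lem:growth-is-spectralradius}(\ref{enu:gurevich-vs-Tn}) convert the diagonal matrix coefficient into $e^{\mathcal{P}(\varphi\circ\pi_{1},\sigma\rtimes\Psi)\cdot 2nk}$. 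Your argument is essentially the $k=1$ step of this iteration; the iteration in $k$ is what upgrades a vector bound to an operator-norm bound, and it cannot be skipped. The difficulty you flag in your last paragraph (controlling the concatenation distortions) is comparatively routine and handled by (\ref{eq:gerltrick-4}).
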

By combining Theorem \ref{thm:amenable-spectralradius-equivalence}
and the first assertion in Proposition \ref{prop:symmetry-pressureisspectralradius},
we obtain Stadlbauer's result (\ref{enu:stadlbauer-fullpressure-amenable}).
Theorem \ref{thm:amenable-spectralradius-equivalence} and the second
assertion in Proposition \ref{prop:symmetry-pressureisspectralradius}
give the following implication, which is similar to Stadlbauer's result
(\ref{enu:stadlbauer-amenablesymmetric-fullpressure}) and \cite[Theorem 5.3.11]{JaerischDissertation11}.
\begin{cor}
\label{cor:amenable-fullpressure-bip}Under the assumptions of Theorem
\ref{thm:amenable-spectralradius-equivalence}, if $G$ is amenable
and if $\varphi$ is asymptotically $\alpha$-symmetric with respect
to $\Psi$, for some $\alpha\ge1$, then $\mathcal{P}\left(\varphi\circ\pi_{1},\sigma\rtimes\Psi\right)\ge\mathcal{P}\left(\varphi,\sigma\right)-\log\alpha$.
\end{cor}
If $\varphi$ is not assumed to be Hölder continuous, or if $\Sigma$
does not satisfy the b.i.p. property as in (\ref{enu:stadlbauer-amenablesymmetric-fullpressure}),
then a Gibbs measure for $\varphi$ does not exist. To obtain an inequality
as in the previous corollary, the existence of an approximating sequence
of Hölder continuous potentials is sufficient. More precisely, suppose
there exists a sequence of subgroups $\left(G_{j}\right)$ of $G$,
group extended Markov systems $\left(\Sigma_{j}\times G_{j},\sigma_{j}\rtimes\Psi_{j}\right)$
and potentials $\varphi_{j}:\Sigma_{j}\rightarrow\R$, such that $\varphi_{j}$
is asymptotically $\alpha_{j}$-symmetric with respect to $\Psi_{j}$
and the assumptions of Theorem \ref{thm:amenable-spectralradius-equivalence}
hold for each $j\in\N$. If moreover $\lim_{j}\mathcal{P}\left(\varphi_{j},\sigma_{j}\right)\ge\mathcal{P}\left(\varphi,\sigma\right)$,
$\lim_{j}\mathcal{P}\left(\varphi_{j}\circ\pi_{1},\sigma_{j}\rtimes\Psi_{j}\right)\le\mathcal{P}\left(\varphi\circ\pi_{1},\sigma\rtimes\Psi\right)$
and $\lim_{j}\alpha_{j}\le\alpha$, then we have \foreignlanguage{english}{$\mathcal{P}\left(\varphi\circ\pi_{1},\sigma\rtimes\Psi\right)\ge\mathcal{P}\left(\varphi,\sigma\right)-\log\alpha$,
provided that $G$ is amenable.  Using this approach, Stadlbauer
derived the assertion in }(\ref{enu:stadlbauer-amenablesymmetric-fullpressure}).

\selectlanguage{english}%
We make use of this approach to give a similar result for compactly
asymptotically $\alpha$-symmetric potentials of medium variation.
Note that $\mathcal{P}\left(\varphi,\sigma\right)$ is allowed to
be infinite in the following corollary.
\selectlanguage{american}%
\begin{cor}
\label{cor:amenability-symmetry-approximation}Let $\Sigma$ be topologically
mixing Markov shift and let $\left(\Sigma\times G,\sigma\rtimes\Psi\right)$
be an irreducible group extended Markov system. Let $\varphi:\Sigma\rightarrow\R$
be of medium variation and suppose that $\varphi$ is compactly asymptotically
$\alpha$-symmetric, for some $\alpha\ge1$. If $G$ is amenable,
then $\mathcal{P}\left(\varphi\circ\pi_{1},\sigma\rtimes\Psi\right)\ge\mathcal{P}\left(\varphi,\sigma\right)-\log\alpha$. 
\end{cor}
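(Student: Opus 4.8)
The plan is to use the compact asymptotic $\alpha$-symmetry to pass to a sequence of finite-alphabet, Hölder problems covered by Corollary~\ref{cor:amenable-fullpressure-bip}, and then to invoke the approximation principle described in the paragraph preceding the corollary. First I would fix the sequence $(\Sigma_{k})_{k\in\N}$ of topologically mixing subshifts with finite alphabets $I_{k}\subset I$, $\bigcup_{k}I_{k}=I$, supplied by the definition, together with the subgroups $G_{k}:=\Psi(\Sigma_{k}^{*})$ and the maps $\Psi_{k}:=\Psi_{|I_{k}^{*}}$, so that $\varphi_{|\Sigma_{k}}$ is asymptotically $\alpha$-symmetric with respect to $\Psi_{k}$. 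Each $\Sigma_{k}$ is topologically mixing and, being over a finite alphabet, automatically has the b.i.p. property, while $(\Sigma_{k}\times G_{k},\sigma_{k}\rtimes\Psi_{k})$ is irreducible because $\Psi(\Sigma_{k}^{*})=G_{k}$. Finally, every subgroup of an amenable group is amenable, so each $G_{k}$ is amenable. Thus all hypotheses of Corollary~\ref{cor:amenable-fullpressure-bip} hold on $\Sigma_{k}$, with the single exception that $\varphi_{|\Sigma_{k}}$ is only of medium variation, not Hölder continuous.

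To remove this defect I would approximate $\varphi_{|\Sigma_{k}}$ from within by locally constant (hence Hölder) potentials. Writing $V_{m}(\varphi):=\sup\{|\varphi(x)-\varphi(y)|:x_{i}=y_{i},\,1\le i\le m\}$, set $\varphi_{k,m}(x):=\sup\{\varphi(y):y\in\Sigma_{k},\,y_{i}=x_{i},\,1\le i\le m\}$; medium variation gives $\Vert\varphi_{k,m}-\varphi_{|\Sigma_{k}}\Vert_{\infty}\le V_{m}(\varphi_{|\Sigma_{k}})\to0$ as $m\to\infty$. Since the Gurevič pressure over a finite alphabet is Lipschitz in the supremum norm of the potential, $\mathcal{P}(\varphi_{k,m},\sigma_{k})\to\mathcal{P}(\varphi_{|\Sigma_{k}},\sigma_{k})$ and $\mathcal{P}(\varphi_{k,m}\circ\pi_{1},\sigma_{k}\rtimes\Psi_{k})\to\mathcal{P}(\varphi_{|\Sigma_{k}}\circ\pi_{1},\sigma_{k}\rtimes\Psi_{k})$ as $m\to\infty$. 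The uniform bound also transfers the symmetry estimate~\eqref{eq:asympt-symmetric}: replacing $\varphi_{|\Sigma_{k}}$ by $\varphi_{k,m}$ perturbs $\sup S_{n}\varphi_{|[\omega]}$ by at most $n\,V_{m}(\varphi)$ on the left and $|\omega|\,V_{m}(\varphi)$ on the right, so the constant $c_{n}$ is multiplied by at most $e^{(2n+N_{n})V_{m}(\varphi)}$; since $N_{n}/n\to0$, the resulting symmetry exponent $\alpha_{k,m}$ satisfies $\alpha_{k,m}\to\alpha$ as $m\to\infty$. Hence $\varphi_{k,m}$ is asymptotically $\alpha_{k,m}$-symmetric and meets every hypothesis of Corollary~\ref{cor:amenable-fullpressure-bip}.

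It remains to assemble a single sequence realising the three limit conditions of the approximation principle. The approximation property of the Gurevič pressure, together with $\bigcup_{k}I_{k}=I$, yields $\mathcal{P}(\varphi_{|\Sigma_{k}},\sigma_{k})\to\mathcal{P}(\varphi,\sigma)$ (allowing the value $+\infty$), while monotonicity of the Gurevič pressure under the subsystem inclusion $\Sigma_{k}\times G_{k}\hookrightarrow\Sigma\times G$ gives $\mathcal{P}(\varphi_{|\Sigma_{k}}\circ\pi_{1},\sigma_{k}\rtimes\Psi_{k})\le\mathcal{P}(\varphi\circ\pi_{1},\sigma\rtimes\Psi)$. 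A diagonal choice $j\mapsto(k_{j},m_{j})$ with $k_{j},m_{j}\to\infty$ then produces $\varphi_{j}:=\varphi_{k_{j},m_{j}}$ on $(\Sigma_{j}\times G_{j},\sigma_{j}\rtimes\Psi_{j}):=(\Sigma_{k_{j}}\times G_{k_{j}},\sigma_{k_{j}}\rtimes\Psi_{k_{j}})$ with $\liminf_{j}\mathcal{P}(\varphi_{j},\sigma_{j})\ge\mathcal{P}(\varphi,\sigma)$, $\limsup_{j}\mathcal{P}(\varphi_{j}\circ\pi_{1},\sigma_{j}\rtimes\Psi_{j})\le\mathcal{P}(\varphi\circ\pi_{1},\sigma\rtimes\Psi)$ and $\alpha_{j}\to\alpha$. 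Applying Corollary~\ref{cor:amenable-fullpressure-bip} to each $j$ and passing to the limit gives $\mathcal{P}(\varphi\circ\pi_{1},\sigma\rtimes\Psi)\ge\mathcal{P}(\varphi,\sigma)-\log\alpha$, as claimed.

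I expect the main obstacle to be the simultaneous control of the two approximations: letting $m\to\infty$ must preserve the asymptotic $\alpha$-symmetry, which is exactly where the hypotheses $N_{n}/n\to0$ and $V_{m}(\varphi)\to0$ interact to push the perturbed exponents $\alpha_{k,m}$ back to $\alpha$, while letting $k\to\infty$ must recover the full base pressure $\mathcal{P}(\varphi,\sigma)$ without raising the group-extension pressure above $\mathcal{P}(\varphi\circ\pi_{1},\sigma\rtimes\Psi)$. Verifying the finite-alphabet approximation and the subsystem-monotonicity of the Gurevič pressure, and organising everything into one diagonal sequence meeting all three limit conditions, is the technical heart of the argument.
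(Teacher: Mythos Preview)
Your proposal is correct and follows essentially the same strategy as the paper: reduce to the finite-alphabet subshifts $\Sigma_{k}$ (where b.i.p.\ is automatic and subgroups of the amenable group $G$ remain amenable), approximate $\varphi_{|\Sigma_{k}}$ by locally constant potentials while tracking the perturbation of the asymptotic symmetry constant, apply Corollary~\ref{cor:amenable-fullpressure-bip}, and pass to the limit via the approximation property of the Gurevi\v{c} pressure. The paper packages the H\"older approximation and symmetry preservation separately as Lemma~\ref{lem:approximation} (using $\inf$ rather than your $\sup$, and the medium-variation constants $D_{j}$ rather than your $V_{m}$) and then runs a two-stage proof (finite alphabet first, then exhaustion by the $\Sigma_{k}$) instead of your single diagonal sequence, but the content is the same.
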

The main ingredients in the proof of Theorem \ref{thm:amenable-spectralradius-equivalence}
are a recent result of Stadlbauer (\cite{Stadlbauer11}) and a result
of Day (\cite[Theorem 1]{MR0159230}). \foreignlanguage{english}{Proposition
\ref{prop:symmetry-pressureisspectralradius} extends \cite[Corollary  3.17]{Jaerisch11a}
and goes back to work of Pier (\cite[pp. 196-202]{MR767264}), which
was used by Gerl (\cite[p. 177]{MR938257}).}
\selectlanguage{english}%
\begin{rem}
\label{tams-comments}In \cite{Jaerisch11a}, the special case of
a finite state Markov shift $\Sigma$ and a locally constant potential
$\varphi:\Sigma\rightarrow\R$ was considered. We have investigated
the action of the Perron-Frobenius operator $\mathcal{L}_{\varphi\circ\pi_{1}}$
on the Banach space $V_{k}\subset L^{2}\left(\Sigma\times G,\mu_{\varphi}\times\lambda\right)$,
for some $k\in\N$, where $V_{k}$ consists of functions $f:\Sigma\times G\rightarrow\R$
which are constant on the cylindrical sets $\left[\omega\right]\times\left\{ g\right\} $,
$\omega\in\Sigma^{k}$, $g\in G$, and $\lambda$ denotes the Haar
measure on $G$. Note that $L^{2}\left(\Sigma\times G,\mu_{\varphi}\times\lambda\right)$
coincides with $\left(\mathcal{H}_{2},\vert\cdot\vert_{2}\right)$
defined in Section 3. Moreover, since $\min\left\{ \sqrt{\mu_{\varphi}\left(\left[\omega\right]\right)}:\omega\in\Sigma^{k}\right\} \left|f\right|_{\infty}\le\left|f\right|_{2}\le\left|f\right|_{\infty}$,
for each $f\in V_{k}$, we have that $\left(V_{k},\vert\cdot\vert_{2}\right)$
is canonically embedded in $\left(\mathcal{H}_{\infty},\vert\cdot\vert_{\infty}\right)$.
Furthermore, it follows from (\ref{eq:norm-on-Hc}) in Lemma \ref{lem:growth-is-spectralradius}
that the norm of $\mathcal{L}_{\varphi\circ\pi_{1}}^{n}:\mathcal{H}_{\infty}\rightarrow\mathcal{H}_{\infty}$
is attained on $V_{k}$, for each $n\in\N$. Hence, the spectral radii
of $\mathcal{L}_{\varphi\circ\pi_{1}}$ and $\mathcal{L}_{\varphi\circ\pi_{1}}\big|_{V_{k}}$
coincide and  the results given in \cite[Theorem 3.21]{Jaerisch11a}
follow from Proposition \ref{prop:symmetry-pressureisspectralradius}
and Theorem \ref{thm:amenable-spectralradius-equivalence}. Finally,
we remark that, for a \foreignlanguage{american}{Hölder continuous
potential $\varphi$, }the logarithm of the spectral radius of $\mathcal{L}_{\varphi\circ\pi_{1}}$
acting on $L^{2}\left(\Sigma\times G,\mu_{\varphi}\times\lambda\right)$
is always equal to $\mathcal{P}\left(\varphi,\sigma\right)$ (\cite[Theorem 3.21]{Jaerisch11a}),\foreignlanguage{american}{
in contrast to $\mathcal{L}_{\varphi\circ\pi_{1}}:\mathcal{H}_{\infty}\rightarrow\mathcal{H}_{\infty}$
considered in this paper}. \end{rem}
\selectlanguage{american}%
\begin{acknowledgement*}
The author thanks Manuel Stadlbauer and Sara Munday for valuable comments
on an ealier version of this paper. The author thanks the referee
for his/her helpful comments on the paper.
\end{acknowledgement*}

\section{Preliminaries}

Throughout, the state space for the symbolic thermodynamic formalism
will be a \emph{Markov shift }
\[
\Sigma:=\left\{ \omega:=\left(\omega_{1},\omega_{2},\ldots\right)\in I^{\N}:\; a\left(\omega_{i},\omega_{i+1}\right)=1\mbox{ for all }i\in\N\right\} 
\]
with finite or countable infinite \emph{alphabet} $I\subset\N$, \emph{incidence
matrix} $A=\left(a\left(i,j\right)\right)\in\left\{ 0,1\right\} ^{I\times I}$
and \emph{left shift map} $\sigma:\Sigma\rightarrow\Sigma$, given
by $\sigma\left(\left(\omega_{1},\omega_{2},\ldots\right)\right):=\left(\omega_{2},\omega_{3},\ldots\right)$.
The set of \emph{$A$-admissible words} of length $n\in\mathbb{N}$
is denoted by $\Sigma^{n}:=\left\{ \omega\in I^{n}:\,\, a\left(\omega_{i},\omega_{i+1}\right)=1,\mbox{ for }1\le i\le n-1\right\} $,
and the set of $A$-admissible words of arbitrary length by $\Sigma^{*}:=\bigcup_{n\in\N}\Sigma^{n}$.
Let us also define the \emph{word length function} $\left|\cdot\right|:\,\Sigma^{*}\cup\Sigma\rightarrow\N\cup\left\{ \infty\right\} $,
where for $\omega\in\Sigma^{*}$ we set $\left|\omega\right|$ to
be the unique $n\in\N$ such that $\omega\in\Sigma^{n}$, and for
$\omega\in\Sigma$ we set $\left|\omega\right|:=\infty$. For each
$\omega\in\Sigma^{*}\cup\Sigma$ and $n\in\N$ with $n\le\left|\omega\right|$,
we define $\omega_{|n}:=\left(\omega_{1},\dots,\omega_{n}\right)$.
For $\omega,\tau\in\Sigma$, we set $\omega\wedge\tau$ to be the
longest common initial block of $\omega$ and $\tau$, that is, $\omega\wedge\tau:=\omega_{|l}$,
where $l:=\sup\left\{ n\in\N:\omega_{|n}=\tau_{|n}\right\} $. For
$n\in\N$ and $\omega\in\Sigma^{n}$, we let $\left[\omega\right]:=\left\{ \tau\in\Sigma:\tau_{|n}=\omega\right\} $
denote the \emph{cylindrical set} given by $\omega$.

If $\Sigma$ is the Markov shift with alphabet $I$ whose incidence
matrix consists entirely of $1$s, then we have that $\Sigma=I^{\N}$
and $\Sigma^{n}=I^{n}$ for all $n\in\N$. Then we set $I^{*}:=\Sigma^{*}$.
For $\omega,\tau\in I^{*}$ we denote by $\omega\tau\in I^{*}$ the
\emph{concatenation} of $\omega$ and $\tau$, which is defined by
$\omega\tau:=\left(\omega_{1},\dots,\omega_{\left|\omega\right|},\tau_{1},\dots,\tau_{\left|\tau\right|}\right)$
for $\omega,\tau\in I^{*}$. Note that $I^{*}$ forms a semigroup
with respect to the concatenation operation. The semigroup $I^{*}$
is the free semigroup over the set $I$ and satisfies the following
universal property: For each semigroup $S$ and for every map $u:I\rightarrow S$,
there exists a unique semigroup homomorphism $\hat{u}:I^{*}\rightarrow S$
such that $\hat{u}\left(i\right)=u\left(i\right)$, for all $i\in I$. 

We equip $I^{\N}$ with the product topology of the discrete topology
on $I$. The Markov shift $\Sigma\subset I^{\N}$ is equipped with
the subspace topology. A countable basis of this topology on $\Sigma$
is given by the cylindrical sets $\left\{ \left[\omega\right]:\omega\in\Sigma^{*}\right\} $.
We will make use of the following metric generating the topology on
$\Sigma$. For $\beta>0$, we define the metric $d_{\beta}$ on $\Sigma$
given by 
\[
d_{\beta}\left(\omega,\tau\right):=\e^{-\beta\left|\omega\wedge\tau\right|},\mbox{ for all }\omega,\tau\in\Sigma.
\]

A function $\varphi:\Sigma\rightarrow\R$ is also called a \emph{potential}.
For $n\in\N$, we use $S_{n}\varphi:\Sigma\rightarrow\R$ to denote
the \emph{ergodic sum} of $\varphi$ with respect to $\sigma$, in
other words, $S_{n}\varphi:=\sum_{i=0}^{n-1}\varphi\circ\sigma^{i}$.
Also, we set $S_{0}\varphi:=0$. The function $\varphi$ is called
\emph{$\beta$-Hölder continuous}, for some $\beta>0$, if 
\[
V_{\beta}\left(\varphi\right):=\sup_{n\ge1}\left\{ V_{\beta,n}\left(\varphi\right)\right\} <\infty,\quad\mbox{where }V_{\beta,n}\left(\varphi\right):=\sup\left\{ \frac{\left|\varphi\left(\omega\right)-\varphi\left(\tau\right)\right|}{d_{\beta}\left(\omega,\tau\right)}:\omega,\tau\in\Sigma,\left|\omega\wedge\tau\right|\ge n\right\} ,n\in\N.
\]
We say that \emph{$\varphi$ }is\emph{ Hölder continuous} if there
exists $\beta>0$ such that $\varphi$ is $\beta$-Hölder continuous.
The function $\varphi$ is of \emph{medium variation}, if $\varphi$
is continuous and if there exists a sequence $\left(D_{n}\right)\in\R^{\N}$
with $\lim_{n}\left(D_{n}\right)^{1/n}=1$ such that, for all $n\in\N$,
$\omega\in\Sigma^{n}$ and $x,y\in\left[\omega\right]$, we have $\e^{S_{n}\varphi\left(x\right)-S_{n}\varphi\left(y\right)}\le D_{n}$. 

We need the following topological mixing properties for Markov shifts. 

\begin{defn} \label{mixing-definitions}Let $\Sigma$ be a Markov
shift with alphabet $I\subset\N$. 
\begin{itemize}
\item $\Sigma$ is \emph{irreducible} if, for all $i,j\in I$, there exists
$\omega\in\Sigma^{*}$ such that $i\omega j\in\Sigma^{*}$.
\item $\Sigma$ is \emph{topologically mixing} if, for all $i,j\in I$,
there exists $n_{0}\in\N$ with the property that, for all $n\ge n_{0}$,
there exists $\omega\in\Sigma^{n}$ such that $i\omega j\in\Sigma^{*}$.
\item $\Sigma$ satisfies the \emph{big images and preimages (b.i.p.)} \emph{property}
(\cite{MR1955261}) if there exists a finite set $\Lambda\subset I$
such that, for all $i\in I$, there exist $a,b\in\Lambda$ such that
$aib\in\Sigma^{3}$. 
\end{itemize}
\end{defn} 

The Gurevi\v{c} pressure of a Hölder continuous potential on a topologically
mixing Markov shift was introduced by Sarig (\cite[Definition 1]{MR1738951}).
One can easily verify that the Gurevi\v{c} pressure is well-defined
also for a potential of medium variation on an irreducible Markov
shift (cf. \cite[p.453]{Stadlbauer11}). The Gurevi\v{c} pressure
extends the notion of the Gurevi\v{c} entropy (\cite{MR0263162,MR0268356}),
which corresponds to the constant zero potential. 

\begin{defn} \label{def:gurevich} Let $\Sigma$ be an irreducible
Markov shift with alphabet $I$ and left shift $\sigma:\Sigma\rightarrow\Sigma$.
Let $\varphi:\Sigma\rightarrow\R$ be of medium variation. For each
$a\in I$ and $n\in\N$, we set 
\[
Z_{n}\left(\varphi,a,\sigma\right):=\sum_{\omega\in\Sigma^{n},\,\,\omega_{1}=a,\,\,\omega a\in\Sigma^{*}}\e^{\sup S_{n}\varphi_{|\left[\omega\right]}}\quad\text{and}\quad Z_{n}^{*}\left(\varphi,a,\sigma\right):=\sum_{\omega\in\Sigma^{n},\,\,\omega_{1}=a,\,\,\omega a\in\Sigma^{*},\,\,\omega_{2}\neq a,\dots,\omega_{n}\neq a}\e^{\sup S_{n}\varphi_{|\left[\omega\right]}}.
\]
The \emph{Gurevi\v{c} pressure} of $\varphi$ with respect to $\sigma$
is, for each $a\in I$, given by 
\[
\mathcal{P}\left(\varphi,\sigma\right):=\limsup_{n\rightarrow\infty}\frac{1}{n}\log Z_{n}\left(\varphi,a,\sigma\right).
\]

\end{defn} 
\begin{rem}
\label{pressure-groupextension}A group extended Markov system $\left(\Sigma\times G,\sigma\rtimes\Psi\right)$
is a Markov shift with alphabet $I\times G$. If $\left(\Sigma\times G,\sigma\rtimes\Psi\right)$
is irreducible, $\varphi:\Sigma\rightarrow\R$ is of medium variation
and $a\in I$, then 
\[
\mathcal{P}\left(\varphi\circ\pi_{1},\sigma\rtimes\Psi\right)=\limsup_{n\rightarrow\infty}\frac{1}{n}\log\sum_{\omega\in\Sigma^{n},\,\,\omega_{1}=a,\,\,\omega a\in\Sigma^{*},\,\,\Psi\left(\omega\right)=\id}\e^{\sup S_{n}\varphi_{|\left[\omega\right]}}.
\]

\end{rem}
The next definition goes back to the work of Ruelle and Bowen (cf.
\cite{MR0289084}, \cite{bowenequilibriumMR0442989}).
\begin{defn}
\label{gibbs-measure}Let $\Sigma$ be a Markov shift and let $\varphi:\Sigma\rightarrow\R$
be Hölder continuous with $\mathcal{P}\left(\varphi,\sigma\right)<\infty$.
We say that a Borel probability measure \emph{$\mu$ }is a\emph{ Gibbs
measure for $\varphi$ }if there exists a constant $C_{\mu}\ge1$
such that 
\begin{equation}
C_{\mu}^{-1}\le\frac{\mu\left(\left[\omega\right]\right)}{\e^{S_{\left|\omega\right|}\varphi\left(\tau\right)-\left|\omega\right|\mathcal{P}\left(\varphi,\sigma\right)}}\le C_{\mu},\mbox{ for all }\omega\in\Sigma^{*}\mbox{ and }\tau\in\left[\omega\right].\label{eq:gibbs-equation}
\end{equation}

\end{defn}
The following criterion for the existence of Gibbs measures is taken
from \cite[Theorem 1]{MR1955261}. The uniqueness follows from \cite[Theorem 2.2.4]{MR2003772}.
The existence of a $\sigma$-invariant Gibbs measure and a Hölder
continuous eigenfunction on a topologically mixing Markov shift with
the b.i.p. property follows from \cite{MR2003772}.
\begin{thm}
\label{thm:existence-of-gibbs-measures}Let $\Sigma$ be a topologically
mixing Markov shift and let $\varphi:\Sigma\rightarrow\R$ be Hölder
continuous. Then there exists a (unique) $\sigma$-invariant Gibbs
measure $\mu_{\varphi}$ for $\varphi$ if and only if $\Sigma$ satisfies
the b.i.p. property and\textup{ $\mathcal{P}\left(\varphi,\sigma\right)<\infty$.}
Moreover, if $\Sigma$ satisfies the b.i.p. property and \textup{$\mathcal{P}\left(\varphi,\sigma\right)<\infty$,}
then there exists a unique Hölder continuous function $h:\Sigma\rightarrow\R^{+}$,
bounded away from zero and infinity, such that $\mathcal{L}_{\varphi}\left(h\right)=\e^{\mathcal{P}\left(\varphi,\sigma\right)}h$
and $\int\psi\, d\mu_{\varphi}=\int\mathcal{L}_{\varphi+\log h-\log h\circ\sigma-\mathcal{P}\left(\varphi,\sigma\right)}\left(\psi\right)\, d\mu_{\varphi}$,
for every bounded continuous function $\psi:\Sigma\rightarrow\R$. 
\end{thm}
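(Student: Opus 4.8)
The plan is to realise $\mu_\varphi$ through the Ruelle--Perron--Frobenius formalism for the operator $\mathcal{L}_\varphi$, building an eigenmeasure and an eigenfunction whose product is the invariant Gibbs measure; the two cited sources (Sarig's b.i.p.\ criterion and the Mauldin--Urba\'nski spectral theory) supply precisely these ingredients, so in the present paper I would invoke them, but the underlying argument runs as follows. I would treat the two directions separately.

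For necessity I would argue directly from the Gibbs inequality \eqref{eq:gibbs-equation}. Fixing $a\in I$ and summing the lower Gibbs bound over all $\omega\in\Sigma^{n}$ with $\omega_{1}=a$ and $\omega a\in\Sigma^{*}$, the total-mass bound $\sum_{\omega}\mu_\varphi([\omega])\le 1$ forces $Z_{n}(\varphi,a,\sigma)\le C_{\mu_\varphi}\,\e^{n\mathcal{P}(\varphi,\sigma)}$, whence $\mathcal{P}(\varphi,\sigma)<\infty$. Conversely, $\sigma$-invariance gives $\mu_\varphi([\omega])=\sum_{i:\,i\omega\in\Sigma^{*}}\mu_\varphi([i\omega])$, and substituting \eqref{eq:gibbs-equation} with its uniform constant $C_{\mu_\varphi}$ forces the number and the weights of admissible one-sided extensions of each word to be controlled uniformly from below and above, which is exactly the big images and preimages property.

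For sufficiency I would normalise $\mathcal{P}(\varphi,\sigma)=0$ and first produce an eigenmeasure $\nu$ by a Schauder--Tychonoff fixed-point argument for the normalised dual $\nu\mapsto\mathcal{L}_\varphi^{*}\nu/(\mathcal{L}_\varphi^{*}\nu)(\1)$ on Borel probability measures in the weak$^{*}$ topology, obtaining $\mathcal{L}_\varphi^{*}\nu=\nu$. Next I would construct the eigenfunction $h$ as the Hilbert-metric limit of the iterates $\mathcal{L}_\varphi^{n}\1$, where the b.i.p.\ property supplies a uniform lower bound (big images) and, together with finiteness of $\mathcal{L}_\varphi\1$, a uniform upper bound (big preimages), so that $h$ is H\"older continuous and bounded away from $0$ and $\infty$; then $\mu_\varphi:=h\,\nu$, normalised, is the candidate. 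Its Gibbs property follows from the bounded distortion of $S_{n}\varphi$ on cylinders afforded by H\"older continuity together with the two-sided bounds on $h$, and $\sigma$-invariance follows by combining $\mathcal{L}_\varphi h=h$ with $\mathcal{L}_\varphi^{*}\nu=\nu$. The final displayed identity merely records that the normalised operator $\mathcal{L}_{\varphi+\log h-\log h\circ\sigma-\mathcal{P}(\varphi,\sigma)}$, which satisfies $\mathcal{L}_{\varphi+\log h-\log h\circ\sigma-\mathcal{P}(\varphi,\sigma)}\1=\1$, is dual-invariant for $\mu_\varphi$; a short computation using $\mathcal{L}_\varphi(hf)=h\cdot\e^{\mathcal{P}(\varphi,\sigma)}\mathcal{L}_{\varphi+\log h-\log h\circ\sigma-\mathcal{P}(\varphi,\sigma)}f$ and $\mathcal{L}_\varphi^{*}\nu=\e^{\mathcal{P}(\varphi,\sigma)}\nu$ reduces the asserted equality to this dual invariance, which is in turn equivalent to $\sigma$-invariance of $\mu_\varphi$. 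Uniqueness I would obtain from ergodicity of $\mu_\varphi$ (the Gibbs property yields mixing), since any two Gibbs measures for the same potential are mutually absolutely continuous with Radon--Nikodym derivative bounded above and below, hence equal. The main obstacle is the construction of $h$ with uniform positive bounds in the infinite-alphabet setting: unlike the finite-state case, where $\mathcal{L}_\varphi$ is a positive matrix and classical Perron--Frobenius applies, one must here exploit the b.i.p.\ property to recover enough contraction in the Hilbert metric (equivalently, a spectral gap) to force the normalised iterates to converge to a strictly positive, bounded limit.
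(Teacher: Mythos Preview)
The paper does not prove this theorem at all: it is stated in the Preliminaries and attributed to Sarig \cite[Theorem 1]{MR1955261} for the Gibbs-measure criterion, to \cite[Theorem 2.2.4]{MR2003772} for uniqueness, and to Mauldin--Urba\'nski \cite{MR2003772} for the eigenfunction. You correctly anticipate this (``in the present paper I would invoke them''), so on the level of what the paper actually does, your proposal matches exactly.

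Your additional sketch of the underlying argument is broadly correct in spirit and identifies the right ingredients (eigenmeasure, eigenfunction bounded away from $0$ and $\infty$ via b.i.p., bounded distortion, ergodicity for uniqueness). One caveat worth flagging: the Schauder--Tychonoff step for the eigenmeasure, as you phrase it, requires weak$^{*}$ compactness of probability measures, which fails outright when $I$ is infinite and $\Sigma$ is not compact; Sarig's actual construction circumvents this by working with conformal measures on cylinders or via a different limiting procedure, and this is precisely where the finiteness of pressure and the b.i.p.\ structure do real work beyond what a naive fixed-point argument would give. Similarly, your derivation of b.i.p.\ from the existence of an invariant Gibbs measure is gestured at rather than carried out; the genuine argument (Sarig) uses both the upper and lower Gibbs bounds together with invariance to extract the uniform finite set $\Lambda$. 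These are not errors in strategy, but they are the places where the infinite-alphabet case diverges nontrivially from the classical Bowen--Ruelle picture you are modelling your outline on.
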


\section{Proofs}

Let us first state the necessary definitions and notations which are
needed for the proofs of our main results. If $\Sigma$ is a topologically
mixing Markov shift with the b.i.p. property, and if $\varphi:\Sigma\rightarrow\R$
is Hölder continuous with $\mathcal{P}\left(\varphi,\sigma\right)<\infty$,
then there exists a unique $\sigma$-invariant Gibbs measure $\mu_{\varphi}$
for $\varphi$ by Theorem \ref{thm:existence-of-gibbs-measures}.
For $p\in\N\cup\left\{ \infty\right\} $ and $\psi\in L^{p}\left(\Sigma,\mathcal{B}\left(\Sigma\right),\mu_{\varphi}\right)$,
we denote by $\Vert\psi\Vert_{p}$ the $L^{p}$-norm of $\psi$, where
$\mathcal{B}\left(\Sigma\right)$ is the Borel sigma algebra of $\Sigma$.
For a group extended Markov system $\left(\Sigma\times G,\sigma\rtimes\Psi\right)$,
Stadlbauer (\cite{Stadlbauer11}) introduced the Banach space $\big(\mathcal{H}_{p},\left|\cdot\right|_{p}\big)$,
given by 
\[
\mathcal{H}_{p}:=\left\{ f:\Sigma\times G\rightarrow\R:\left|f\right|_{p}<\infty\right\} ,\quad\mbox{where }\left|f\right|_{p}:=\sqrt{\sum_{g\in G}\left(\Vert f\left(\cdot,g\right)\Vert_{p}\right)^{2}}.
\]
Denote by $\1_{\Omega}$ the indicator function of a set $\Omega\subset\Sigma\times G$.
Let $\mathcal{H}_{c}$ be the closed subspace of $\mathcal{H}_{\infty}$
generated by $\left\{ \1_{\Sigma\times\left\{ g\right\} }:g\in G\right\} $,
which is isomorphic to the Hilbert space $\ell^{2}\left(G\right)$.
We use \foreignlanguage{english}{$\left(\cdot,\cdot\right)$ to denote
the inner product on }$\mathcal{H}_{c}$, given by\foreignlanguage{english}{
$\left(f_{1},f_{2}\right):=\sum_{g\in G}\int f_{1}(\cdot,g)f_{2}(\cdot,g)d\mu_{\varphi}$,
for all $f_{1},f_{2}\in\mathcal{H}_{c}$.} For a closed subspace $\mathcal{V}\subset\mathcal{H}_{\infty}$,
we set $\mathcal{V}^{+}:=\left\{ f\in\mathcal{V}:f\ge0\right\} $.
For closed subspaces $\mathcal{V}_{1},\mathcal{V}_{2}\subset\mathcal{H}_{\infty}$
and a bounded linear operator $T:\mathcal{V}_{1}\rightarrow\mathcal{V}_{2}$,
the operator norm of $T$ is given by $\Vert T\Vert:=\sup_{f\in\mathcal{V}_{1},\vert f\vert_{\infty}=1}\vert T\left(f\right)\vert_{\infty}$.
We say that $T$ is positive if $T\left(\mathcal{V}_{1}^{+}\right)\subset\mathcal{V}_{2}^{+}$.
It is well-known that, if $T$ is positive and $\mathcal{V}=\mathcal{V}^{+}-\mathcal{V}^{+}$,
then $\Vert T\Vert=\sup_{f\in\mathcal{V}_{1}^{+},\vert f\vert_{\infty}=1}\vert T\left(f\right)\vert_{\infty}$. 

It is shown in \cite[Proposition 5.2]{Stadlbauer11} that the Perron-Frobenius
operator $\mathcal{L}_{\varphi\circ\pi_{1}}:\mathcal{H}_{\infty}\rightarrow\mathcal{H}_{\infty}$
is a bounded linear operator. Further, if $\mathcal{L}_{\varphi}\left(\1\right)=\1$,
then 
\begin{equation}
\Vert\mathcal{L}_{\varphi\circ\pi_{1}}^{n}\Vert\le C,\quad n\in\N,\label{eq:potenzstetig}
\end{equation}
where $C=C_{\mu_{\varphi}}\ge1$ denotes the constant of the Gibbs
measure $\mu_{\varphi}$ given by (\ref{eq:gibbs-equation}). 
\begin{defn}
We define the linear operators \foreignlanguage{english}{$\A:\mathcal{H}_{\infty}\rightarrow\mathcal{H}_{c}$
and }$T_{n}:\mathcal{H}_{c}\rightarrow\mathcal{H}_{c}$\foreignlanguage{english}{,
which are for $f\in\mathcal{H}_{\infty}$ and $n\in\N$ given by}
\[
\A\left(f\right):=\sum_{g\in G}\left(\int f\left(\cdot,g\right)d\mu_{\varphi}\right)\1_{\Sigma\times\left\{ g\right\} }\quad\mbox{ and }\quad T_{n}:=A\mathcal{L}_{\varphi\circ\pi_{1}}^{n}\big|_{\mathcal{H}_{c}.}
\]
Clearly, the linear operators $A$ and $T_{n}$ are positive and bounded
with $\Vert A\Vert=1$ and $\Vert T_{n}\Vert\le\Vert A\Vert\Vert\mathcal{L}_{\varphi\circ\pi_{1}}^{n}\Vert=\Vert\mathcal{L}_{\varphi\circ\pi_{1}}^{n}\Vert$,
for each $n\in\N$. To state the next lemma, let us recall the definition
of \foreignlanguage{english}{$\Lambda_{n}$ (}\cite[Proposition 5.2]{Stadlbauer11}\foreignlanguage{english}{),
which is given by} $\Lambda_{n}:=\sup_{f\in\mathcal{H}_{c}^{+},\left|f\right|_{\infty}=1}\left|\mathcal{L}_{\varphi\circ\pi_{1}}^{n}\left(f\right)\right|_{1}$,
\foreignlanguage{english}{for each $n\in\N$}. \end{defn}
\begin{lem}
\label{lem:growth-is-spectralradius}Under the assumptions of Theorem
\ref{thm:amenable-spectralradius-equivalence}, suppose that $\mathcal{L}_{\varphi}\left(\1\right)=\1$.
Then we have the following.
\begin{enumerate}
\item \label{enu:Lambdan-is-Tnnorm}$\Vert T_{n}\Vert=\Lambda_{n}$, $n\in\N$. 
\selectlanguage{english}%
\item \textup{\label{enu:Ln-Tn-norm}$\Vert T_{n}\Vert\le\Vert\mathcal{L}_{\varphi\circ\pi_{1}}^{n}\Vert\le C\Vert T_{n}\Vert$,
$n\in\N$. }
\selectlanguage{american}%
\item \label{enu:spectralradius-lambdan}$\lim_{n\rightarrow\infty}\Vert T_{n}\Vert^{1/n}=\lim_{n\rightarrow\infty}\left(\Lambda_{n}\right)^{1/n}=\rho\left(\mathcal{L}_{\varphi\circ\pi_{1}}\right)$. 
\selectlanguage{english}%
\item \textup{\label{enu:gurevich-vs-Tn}$\limsup_{n\rightarrow\infty}n^{-1}\log\left(T_{n}\1_{\Sigma\times\left\{ \id\right\} },\1_{\Sigma\times\left\{ \id\right\} }\right)=\mathcal{P}\left(\varphi\circ\pi_{1},\sigma\rtimes\Psi\right)$. }
\end{enumerate}
\end{lem}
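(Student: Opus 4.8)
The plan is to prove the four items in turn, using the normalisation $\mathcal{L}_{\varphi}(\1)=\1$ (so that $\mathcal{P}(\varphi,\sigma)=0$ and $h\equiv\1$ in Theorem \ref{thm:existence-of-gibbs-measures}) at every step: it makes $\mu_{\varphi}$ conformal, i.e. $\int\mathcal{L}_{\varphi}^{n}(\psi)\,d\mu_{\varphi}=\int\psi\,d\mu_{\varphi}$, and turns (\ref{eq:gibbs-equation}) into $C^{-1}\e^{S_{n}\varphi(\tau)}\le\mu_{\varphi}([\omega])\le C\e^{S_{n}\varphi(\tau)}$ for $\tau\in[\omega]$. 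For item (\ref{enu:Lambdan-is-Tnnorm}), I first observe that for $f\in\mathcal{H}_{c}^{+}$ one has $\mathcal{L}_{\varphi\circ\pi_{1}}^{n}(f)\ge0$, so the constant value of $T_{n}(f)$ on the fibre $\Sigma\times\{g\}$ equals $\int\mathcal{L}_{\varphi\circ\pi_{1}}^{n}(f)(\cdot,g)\,d\mu_{\varphi}=\Vert\mathcal{L}_{\varphi\circ\pi_{1}}^{n}(f)(\cdot,g)\Vert_{1}$; summing the squares gives $\vert T_{n}(f)\vert_{\infty}=\vert\mathcal{L}_{\varphi\circ\pi_{1}}^{n}(f)\vert_{1}$. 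Since $T_{n}$ is positive and $\mathcal{H}_{c}=\mathcal{H}_{c}^{+}-\mathcal{H}_{c}^{+}$, the operator-norm formula recalled above yields $\Vert T_{n}\Vert=\sup_{f\in\mathcal{H}_{c}^{+},\vert f\vert_{\infty}=1}\vert\mathcal{L}_{\varphi\circ\pi_{1}}^{n}(f)\vert_{1}=\Lambda_{n}$, which is (\ref{enu:Lambdan-is-Tnnorm}); the first inequality in (\ref{enu:Ln-Tn-norm}) is immediate from $\Vert\A\Vert=1$.

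For the second inequality in (\ref{enu:Ln-Tn-norm}) I would first reduce the full operator norm to $\mathcal{H}_{c}^{+}$. Given $F\in\mathcal{H}_{\infty}^{+}$, dominate it by $\tilde{F}:=\sum_{g}\Vert F(\cdot,g)\Vert_{\infty}\1_{\Sigma\times\{g\}}\in\mathcal{H}_{c}^{+}$, which has $\vert\tilde{F}\vert_{\infty}=\vert F\vert_{\infty}$; positivity of $\mathcal{L}_{\varphi\circ\pi_{1}}^{n}$ and monotonicity of $\vert\cdot\vert_{\infty}$ on non-negative functions give $\vert\mathcal{L}_{\varphi\circ\pi_{1}}^{n}(F)\vert_{\infty}\le\vert\mathcal{L}_{\varphi\circ\pi_{1}}^{n}(\tilde{F})\vert_{\infty}$, and hence
\begin{equation}
\Vert\mathcal{L}_{\varphi\circ\pi_{1}}^{n}\Vert=\sup_{f\in\mathcal{H}_{c}^{+},\,\vert f\vert_{\infty}=1}\vert\mathcal{L}_{\varphi\circ\pi_{1}}^{n}(f)\vert_{\infty}.\label{eq:norm-on-Hc}
\end{equation}
The key estimate is then fibrewise: for $f\in\mathcal{H}_{c}^{+}$ and each $g$ one has $\Vert\mathcal{L}_{\varphi\circ\pi_{1}}^{n}(f)(\cdot,g)\Vert_{\infty}\le C\int\mathcal{L}_{\varphi\circ\pi_{1}}^{n}(f)(\cdot,g)\,d\mu_{\varphi}$. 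To see this, write $f=\sum_{h}a_{h}\1_{\Sigma\times\{h\}}$ with $a_{h}\ge0$ and group the preimages by the value of $\Psi$, so that $\mathcal{L}_{\varphi\circ\pi_{1}}^{n}(f)(\cdot,g)=\sum_{h}a_{h}\mathcal{L}_{\varphi}^{n}(\1_{E_{h^{-1}g}})$, where $E_{\gamma}:=\bigcup_{\omega\in\Sigma^{n},\,\Psi(\omega)=\gamma}[\omega]$. Gibbs and conformality give $\sup\mathcal{L}_{\varphi}^{n}(\1_{E_{\gamma}})\le C\mu_{\varphi}(E_{\gamma})=C\int\mathcal{L}_{\varphi}^{n}(\1_{E_{\gamma}})\,d\mu_{\varphi}$, and summing against $a_{h}$ proves the claim. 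Squaring, summing over $g$ and comparing with item (\ref{enu:Lambdan-is-Tnnorm}) yields $\vert\mathcal{L}_{\varphi\circ\pi_{1}}^{n}(f)\vert_{\infty}\le C\vert T_{n}(f)\vert_{\infty}$, so by (\ref{eq:norm-on-Hc}) we get $\Vert\mathcal{L}_{\varphi\circ\pi_{1}}^{n}\Vert\le C\Vert T_{n}\Vert$, finishing (\ref{enu:Ln-Tn-norm}).

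Item (\ref{enu:spectralradius-lambdan}) is then soft: Gelfand's formula gives $\Vert\mathcal{L}_{\varphi\circ\pi_{1}}^{n}\Vert^{1/n}\to\rho(\mathcal{L}_{\varphi\circ\pi_{1}})$, and (\ref{enu:Ln-Tn-norm}) sandwiches $C^{-1/n}\Vert\mathcal{L}_{\varphi\circ\pi_{1}}^{n}\Vert^{1/n}\le\Vert T_{n}\Vert^{1/n}\le\Vert\mathcal{L}_{\varphi\circ\pi_{1}}^{n}\Vert^{1/n}$; since $C^{1/n}\to1$, both $\Vert T_{n}\Vert^{1/n}$ and, by (\ref{enu:Lambdan-is-Tnnorm}), $(\Lambda_{n})^{1/n}$ converge to $\rho(\mathcal{L}_{\varphi\circ\pi_{1}})$.

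Finally, for (\ref{enu:gurevich-vs-Tn}) I would compute the inner product by conformality: only the $\id$-fibre contributes, so $(T_{n}\1_{\Sigma\times\{\id\}},\1_{\Sigma\times\{\id\}})=\int\mathcal{L}_{\varphi\circ\pi_{1}}^{n}(\1_{\Sigma\times\{\id\}})(\cdot,\id)\,d\mu_{\varphi}=\mu_{\varphi}(E_{\id})=\sum_{\omega\in\Sigma^{n},\,\Psi(\omega)=\id}\mu_{\varphi}([\omega])$. Two-sided Gibbs shows this is comparable, up to the factor $C$, to $\hat{Z}_{n}:=\sum_{\omega\in\Sigma^{n},\,\Psi(\omega)=\id}\e^{\sup S_{n}\varphi_{|[\omega]}}$, so it suffices to identify $\limsup_{n}n^{-1}\log\hat{Z}_{n}$ with the return sum in Remark \ref{pressure-groupextension}. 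The inequality $Z_{n}(\varphi\circ\pi_{1},(a,\id),\sigma\rtimes\Psi)\le\hat{Z}_{n}$ is trivial. The reverse growth bound is the main obstacle, and is where the b.i.p. hypothesis enters: one prepends and appends symbols from the finite set $\Lambda$ to force $\omega_{1}=a$ and $\omega a\in\Sigma^{*}$, which alters $\Psi(\omega)$ only by an element of a fixed finite subset of $G$ depending on $\Lambda$ and, by irreducibility of the extension, can be corrected back to $\id$ by a connecting word of bounded length. As all insertions are drawn from finite reservoirs (and $\varphi\le0$ together with bounded distortion keeps the weights uniformly controlled), this produces $\hat{Z}_{n}\le K\sum_{l=n}^{n+M}Z_{l}(\varphi\circ\pi_{1},(a,\id),\sigma\rtimes\Psi)$ for constants $K,M$, whence the two growth rates agree and equal $\mathcal{P}(\varphi\circ\pi_{1},\sigma\rtimes\Psi)$.
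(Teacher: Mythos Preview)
Your argument is correct and follows the paper's proof essentially step for step: the reduction of $\Vert\mathcal{L}_{\varphi\circ\pi_{1}}^{n}\Vert$ to $\mathcal{H}_{c}^{+}$ via the dominating function $\tilde F$, the Gibbs/conformality comparison of the fibrewise $\Vert\cdot\Vert_{\infty}$ and $\Vert\cdot\Vert_{1}$ norms, and the Gelfand sandwich for (\ref{enu:spectralradius-lambdan}) all match the paper. For the nontrivial direction in (\ref{enu:gurevich-vs-Tn}) the paper is a bit more concrete than your sketch: instead of a two-stage ``prepend/append from $\Lambda$, then correct $\Psi$ back to $\id$'' procedure, it first extracts from b.i.p.\ together with irreducibility of the extension a single finite set $F\subset\Psi^{-1}(\id)\cap\Sigma^{*}$ of universal connectors and a word $\kappa(a)$ with $\Psi(a\kappa(a))=\id$, and sends $\omega\mapsto a\gamma_{1}\kappa(a)\gamma_{2}\omega\gamma_{3}$ with $\gamma_{i}\in F$, so that $\Psi=\id$ is automatic and no separate correction is needed; your description (literally prepending letters from $\Lambda$ does not force the first symbol to equal the fixed $a$) is slightly loose, but you have named exactly the right ingredients, and the remark that $\varphi\le 0$ is in fact unnecessary since only finitely many inserted words occur.
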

\begin{proof}
Let $n\in\N$. The assertion in (\ref{enu:Lambdan-is-Tnnorm}) follows
because $T_{n}$ is positive and $\left|T_{n}\left(f\right)\right|_{\infty}=\left|\mathcal{L}_{\varphi\circ\pi_{1}}^{n}\left(f\right)\right|_{1}$,
for each $f\in\mathcal{H}_{c}^{+}$. The first inequality in (\ref{enu:Ln-Tn-norm})
holds since $\Vert A\Vert=1$. To prove the second inequality we show
that 
\begin{equation}
\Vert\mathcal{L}_{\varphi\circ\pi_{1}}^{n}\Vert=\sup\left\{ \left|\mathcal{L}_{\varphi\circ\pi_{1}}^{n}\left(f\right)\right|_{\infty}:f\in\mathcal{H}_{c}^{+},\left|f\right|_{\infty}=1\right\} .\label{eq:norm-on-Hc}
\end{equation}
For each $f\in\mathcal{H}_{\infty}^{+}$, there exists $\tilde{f}\in\mathcal{H}_{c}^{+}$,
given by $\tilde{f}\left(x,g\right):=\Vert f\left(\cdot,g\right)\Vert_{\infty}$,
$\left(x,g\right)\in\Sigma\times G$, which satisfies $\left|\tilde{f}\right|_{\infty}=\left|f\right|_{\infty}$
and $\left|\mathcal{L}_{\varphi\circ\pi_{1}}^{n}\left(f\right)\right|_{\infty}\le\left|\mathcal{L}_{\varphi\circ\pi_{1}}^{n}\left(\tilde{f}\right)\right|_{\infty}$.
Hence, (\ref{eq:norm-on-Hc}) follows. To prove $\Vert\mathcal{L}_{\varphi\circ\pi_{1}}^{n}\Vert\le C\Vert T_{n}\Vert$,
let $f\in\mathcal{H}_{c}^{+}$. \foreignlanguage{english}{By the Gibbs
property }(\ref{eq:gibbs-equation})\foreignlanguage{english}{ of
$\mu_{\varphi}$, there exists $C\ge1$ such that, }for each $g\in G$
and $x_{0}\in\Sigma$, 
\[
\Vert\mathcal{L}_{\varphi\circ\pi_{1}}^{n}\left(f\right)\left(\cdot,g\right)\Vert_{\infty}\le\sum_{\omega\in\Sigma^{n}}\sup_{\tau\in\Sigma:\omega\tau\in\Sigma}\e^{S_{n}\varphi\left(\omega\tau\right)}f\big(x_{0},g\Psi(\omega)^{-1}\big)\le C\sum_{\omega\in\Sigma^{n}}\mu_{\varphi}\left(\left[\omega\right]\right)f\big(x_{0},g\Psi\left(\omega\right)^{-1}\big).
\]
By \foreignlanguage{english}{Theorem \ref{thm:existence-of-gibbs-measures}}
we have $\mu_{\varphi}\left(\left[\omega\right]\right)=\int\mathcal{L}_{\varphi}^{n}\left(\1_{\left[\omega\right]}\right)\, d\mu_{\varphi}$,
for each $\omega\in\Sigma^{n}$. Hence, we obtain that 
\begin{align}
 & \quad\,\,\,\sum_{\omega\in\Sigma^{n}}\mu_{\varphi}\left(\left[\omega\right]\right)f\big(x_{0},g\Psi\left(\omega\right)^{-1}\big)=\sum_{\omega\in\Sigma^{n}}\int\mathcal{L}_{\varphi}^{n}\left(\1_{\left[\omega\right]}\right)f\big(x_{0},g\Psi\left(\omega\right)^{-1}\big)\, d\mu_{\varphi}\label{eq:1-norm-vs-gibbs}\\
 & =\sum_{\omega\in\Sigma^{n}}\int\mathcal{L}_{\varphi\circ\pi_{1}}^{n}\big(\1_{\left[\omega\right]\times G}f\big)\left(\cdot,g\right)\, d\mu_{\varphi}=\int\mathcal{L}_{\varphi\circ\pi_{1}}^{n}\left(f\right)\left(\cdot,g\right)\, d\mu_{\varphi}=\Vert\mathcal{L}_{\varphi\circ\pi_{1}}^{n}\left(f\right)\left(\cdot,g\right)\Vert_{1},\nonumber 
\end{align}
which proves \foreignlanguage{english}{$\left|\mathcal{L}_{\varphi\circ\pi_{1}}^{n}\left(f\right)\right|_{\infty}\le C\left|\mathcal{L}_{\varphi\circ\pi_{1}}^{n}\left(f\right)\right|_{1}=C\left|T_{n}\left(f\right)\right|_{\infty}$}
and finishes  the proof of (\ref{enu:Ln-Tn-norm}). 

The assertions in (\ref{enu:spectralradius-lambdan}) follow by combining
(\ref{enu:Lambdan-is-Tnnorm}), (\ref{enu:Ln-Tn-norm}) and Gelfand's
formula for the spectral radius (see e.g. \cite[Theorem 10.13]{MR0365062}).
Let us now turn to the proof of (\ref{enu:gurevich-vs-Tn}). By \foreignlanguage{english}{(\ref{eq:1-norm-vs-gibbs}),
we see that }$T_{n}$ is for each \foreignlanguage{english}{$n\in\N$,
$f\in\mathcal{H}_{c}$ and $x_{0}\in\Sigma$ given by
\begin{equation}
T_{n}\left(f\right)=\sum_{g\in G}\left(\sum_{\omega\in\Sigma^{n}}\mu_{\varphi}\left(\left[\omega\right]\right)f\big(x_{0},g\Psi\left(\omega\right)^{-1}\big)\right)\1_{\Sigma\times\left\{ g\right\} }.\label{eq:Tn-is-convolution}
\end{equation}
 }Consequently, by the Gibbs property (\ref{eq:gibbs-equation}) of
$\mu_{\varphi}$ we have 
\[
C^{-1}\sum_{\omega\in\Sigma^{n}:\Psi\left(\omega\right)=\id}\e^{\sup S_{n}\varphi_{|\left[\omega\right]}}\le\left(T_{n}\1_{\Sigma\times\left\{ \id\right\} },\1_{\Sigma\times\left\{ \id\right\} }\right)\le C\sum_{\omega\in\Sigma^{n}:\Psi\left(\omega\right)=\id}\e^{\inf S_{n}\varphi_{|\left[\omega\right]}}.
\]
It follows that $\mathcal{P}\left(\varphi\circ\pi_{1},\sigma\rtimes\Psi\right)\le\limsup_{n\rightarrow\infty}n^{-1}\log\left(T_{n}\1_{\Sigma\times\left\{ \id\right\} },\1_{\Sigma\times\left\{ \id\right\} }\right)$
(cf. Remark \ref{pressure-groupextension}). To prove equality, we
fix $a\in I$. Since $\left(\Sigma\times G,\sigma\rtimes\Psi\right)$
is irreducible, there exists $\kappa\left(a\right)\in\Sigma^{*}$
such that $\Psi\left(a\right)\Psi\left(\kappa\left(a\right)\right)=\id$.
\foreignlanguage{english}{Since $\Sigma$ has the b.i.p. property
and }$\left(\Sigma\times G,\sigma\rtimes\Psi\right)$ is irreducible\foreignlanguage{english}{,
there exists a finite set $F\subset\Psi^{-1}\left(\id\right)\cap\Sigma^{*}$
such that, for all $i,j\in I$ there exists $\gamma\in F$ with $i\gamma j\in\Sigma^{*}$.
Set $l:=\max_{\gamma\in F}\left|\gamma\right|$. For each $n\in\N$
and $\omega\in\Sigma^{n}$, there exist $\gamma_{1},\gamma_{2},\gamma_{3}\in F$
such that $\tau\left(\omega\right):=a\gamma_{1}\kappa\left(a\right)\gamma_{2}\omega\gamma_{3}a\in\Sigma^{*}$.
This defines a map from $\left\{ \omega\in\Sigma^{n}:\Psi\left(\omega\right)=\id\right\} $
to $\left\{ \tau\in\Sigma^{k}:n\le k\le n+3l+\left|\kappa\left(a\right)\right|+1,\Psi\left(\tau\right)=\id,\tau_{1}=a,\tau a\in\Sigma^{*}\right\} $,
which is at most $\left(3l+\left|\kappa\left(a\right)\right|\right)$-to-one,
for each $n\in\N$. Setting $M:=\e^{-\inf\varphi_{|\left[a\right]}}\e^{-\inf S_{\left|\kappa\left(a\right)\right|}\varphi_{|\left[\kappa\left(a\right)\right]}}\max_{\gamma\in F}\e^{-3\inf S_{\left|\gamma\right|}\varphi_{|\left[\gamma\right]}}$,
we obtain that, for each $n\in\N$, 
\[
\sum_{\omega\in\Sigma^{n}:\Psi\left(\omega\right)=\id}\e^{\inf S_{n}\varphi_{|\left[\omega\right]}}\le M\left(3l+\left|\kappa\left(a\right)\right|\right)\sum_{k=n}^{n+3l+\left|\kappa\left(a\right)\right|+1}\sum_{\tau\in\Sigma^{k}:\Psi\left(\tau\right)=\id,\tau_{1}=a,\tau a\in\Sigma^{*}}\e^{\sup S_{k}\varphi_{|\left[\tau\right]}},
\]
which gives $\limsup_{n\rightarrow\infty}n^{-1}\log\left(T_{n}\1_{\Sigma\times\left\{ \id\right\} },\1_{\Sigma\times\left\{ \id\right\} }\right)\le\mathcal{P}\left(\varphi\circ\pi_{1},\sigma\rtimes\Psi\right)$
and finishes the proof of (\ref{enu:gurevich-vs-Tn}).}
\end{proof}

\selectlanguage{english}%
\begin{proof}
[Proof of Theorem $\ref{thm:amenable-spectralradius-equivalence}$]Let
us first verify that we may assume without loss of generality that
$\mathcal{L}_{\varphi}\left(\1\right)=\1$ and hence, $\mathcal{P}\left(\varphi,\sigma\right)=0$.
Otherwise, there exists a H\"older continuous function $h:\Sigma\rightarrow\R^{+}$,
bounded away from zero and infinity, such that $\mathcal{L}_{\varphi}\left(h\right)=\e^{\mathcal{P}\left(\varphi,\sigma\right)}h$
by Theorem \ref{thm:existence-of-gibbs-measures}. Setting $\tilde{\varphi}:=\varphi+\log h-\log h\circ\sigma-\mathcal{P}\left(\varphi,\sigma\right)$,
we have $\mathcal{L}_{\tilde{\varphi}}\left(\1\right)=\1$ and $\mathcal{P}\left(\tilde{\varphi},\sigma\right)=0$.
Since $\mathcal{L}_{\tilde{\varphi}\circ\pi_{1}}\left(f\right)=\e^{-\mathcal{P}\left(\varphi,\sigma\right)}\frac{1}{h\circ\pi_{1}}\mathcal{L}_{\varphi\circ\pi_{1}}\left(f\left(h\circ\pi_{1}\right)\right)$,
for each $f\in\mathcal{H}_{\infty}$, and using that $h\circ\pi_{1}$
is bounded away from zero and infinity, we obtain that $\mathcal{L}_{\tilde{\varphi}\circ\pi_{1}}$
and $\e^{-\mathcal{P}\left(\varphi,\sigma\right)}\mathcal{L}_{\varphi\circ\pi_{1}}$
have the same spectrum. Hence, we have $\log\rho\left(\mathcal{L}_{\tilde{\varphi}\circ\pi_{1}}\right)=\log\rho\left(\mathcal{L}_{\varphi\circ\pi_{1}}\right)-\mathcal{P}\left(\varphi,\sigma\right)$.
We have thus shown that we may assume without loss of generality that
$\mathcal{L}_{\varphi}\left(\1\right)=\1$. 

\selectlanguage{american}%
That $\rho\left(\mathcal{L}_{\varphi\circ\pi_{1}}\right)\le1$ follows
from (\ref{eq:potenzstetig}) and Gelfand's formula for the spectral
radius. \foreignlanguage{english}{We now turn to the proof of the
amenability dichotomy. First suppose that $\rho\left(\mathcal{L}_{\varphi\circ\pi_{1}}\right)=1$.
By Lemma \ref{lem:growth-is-spectralradius} (\ref{enu:spectralradius-lambdan})
we obtain that $\lim_{n\rightarrow\infty}\left(\Lambda_{n}\right)^{1/n}=1$.
It follows from \cite[Lemma 5.3, Theorem 5.4]{Stadlbauer11} that
$G$ is amenable. In fact, the assumption that $\e^{\mathcal{P}\left(\varphi\circ\pi_{1},\sigma\rtimes\Psi\right)}=1$
in \cite[Lemma 5.3, Theorem 5.4]{Stadlbauer11} can be relaxed to
$\lim_{n\rightarrow\infty}\left(\Lambda_{n}\right)^{1/n}=1$ without
affecting the proofs. }

\selectlanguage{english}%
To prove the converse implication, suppose that $G$ is amenable.
\foreignlanguage{american}{It follows from a well-known result of
Day (\cite[Theorem 1(d)]{MR0159230}) that $\Vert T_{n}\Vert=1$,
for each $n\in\N$. To prove this, observe that by (\ref{eq:Tn-is-convolution}),
we have that $T_{n}$ is the right convolution operator with respect
to the probability density on $G$, given by $\mu_{\varphi}\big(\left\{ x\in\Sigma:\Psi\left(x_{1},\dots,x_{n}\right)=g\right\} \big)$,
for each $g\in G$. Finally, we conclude that }$\rho\left(\mathcal{L}_{\varphi\circ\pi_{1}}\right)=1$\foreignlanguage{american}{
by Lemma \ref{lem:growth-is-spectralradius} (\ref{enu:spectralradius-lambdan})}.
The proof is complete.
\end{proof}

\begin{proof}
[Proof of Proposition $\ref{prop:symmetry-pressureisspectralradius}$]\foreignlanguage{american}{
By the arguments given at the beginning of the proof of Theorem \ref{thm:amenable-spectralradius-equivalence}
and by  Remark \ref{rem:symmetry-coboundary}, we may assume that
$\mathcal{L}_{\varphi}\left(\1\right)=\1$. To prove $\mathcal{P}\left(\varphi\circ\pi_{1},\sigma\rtimes\Psi\right)\le\log\rho\left(\mathcal{L}_{\varphi\circ\pi_{1}}\right)$,
we observe that, by Lemma \ref{lem:growth-is-spectralradius} (\ref{enu:gurevich-vs-Tn}),
the Cauchy-Schwarz inequality and Lemma \ref{lem:growth-is-spectralradius}
}(\ref{enu:spectralradius-lambdan})\foreignlanguage{american}{, we
have 
\[
\mathcal{P}\left(\varphi\circ\pi_{1},\sigma\rtimes\Psi\right)=\limsup_{n\rightarrow\infty}\frac{1}{n}\log\left(T_{n}\1_{\Sigma\times\left\{ \id\right\} },\1_{\Sigma\times\left\{ \id\right\} }\right)\le\limsup_{n\rightarrow\infty}\frac{1}{n}\log\Vert T_{n}\Vert=\log\rho\left(\mathcal{L}_{\varphi\circ\pi_{1}}\right).
\]
}Now suppose that $\varphi$ is asymptotically $\alpha$-symmetric
with respect to $\Psi$. \foreignlanguage{american}{The main task
is to prove that $\lim_{n\rightarrow\infty}\Vert T_{n}\Vert^{1/n}\le\alpha\e^{\mathcal{P}\left(\varphi\circ\pi_{1},\sigma\rtimes\Psi\right)}$,
from which t}he proposition follows by Lemma \ref{lem:growth-is-spectralradius}
(\ref{enu:spectralradius-lambdan}). Since $T_{n}^{*}T_{n}$ is a
self-adjoint operator on the Hilbert space $\mathcal{H}_{c}$, an
argument of Pier (\cite[pp. 196-202]{MR767264}), which was used by
\foreignlanguage{american}{Gerl} (\cite[p. 177]{MR938257}), shows
that for each $n\in\N$, 
\begin{equation}
\Vert T_{n}^{*}T_{n}\Vert=\limsup_{k\rightarrow\infty}\left(\left(T_{n}^{*}T_{n}\right)^{k}\left(\1_{\Sigma\times\left\{ \id\right\} }\right),\1_{\Sigma\times\left\{ \id\right\} }\right)^{1/k}.\label{eq:gerltrick-1}
\end{equation}
By (\ref{eq:Tn-is-convolution}) and the Gibbs property (\ref{eq:gibbs-equation})
of $\mu_{\varphi}$, we have for all $g_{1},g_{2}\in G$ and $n\in\N$
that 
\begin{equation}
C^{-1}\sum_{\omega\in\Sigma^{n}:g_{1}\Psi\left(\omega\right)=g_{2}}\e^{\sup S_{n}\varphi_{|\left[\omega\right]}}\le\left(T_{n}\1_{\Sigma\times\left\{ g_{1}\right\} },\1_{\Sigma\times\left\{ g_{2}\right\} }\right)\le C\sum_{\omega\in\Sigma^{n}:g_{1}\Psi\left(\omega\right)=g_{2}}\e^{\inf S_{n}\varphi_{|\left[\omega\right]}}.\label{eq:gerltrick-2}
\end{equation}
Since $\varphi$ is asymptotically $\alpha$-symmetric with respect
to $\Psi$, there exist $n_{0}\in\N$ and sequences $\left(c_{n}\right)\in\left(\R^{+}\right)^{\N}$
and $\left(N_{n}\right)\in\N^{\N}$ with the property that $\lim_{n}(c_{n})^{1/(2n)}=\alpha$
and $\lim_{n}n^{-1}N_{n}=0$, such that, for all $f_{1},f_{2}\in\mathcal{H}_{c}^{+}$
and $n>\max\left\{ N_{n},n_{0}\right\} $, we have
\begin{equation}
\left(T_{n}\left(f_{1}\right),f_{2}\right)\le c_{n}C^{2}\sum_{i=-N_{n}}^{N_{n}}\left(T_{n+i}^{*}\left(f_{1}\right),f_{2}\right).\label{eq:gerltrick-3}
\end{equation}
Since $\Sigma$ has the b.i.p. property and $\left(\Sigma\times G,\sigma\rtimes\Psi\right)$
is irreducible, there exists a finite set $F\subset\Psi^{-1}\left(\id\right)\cap\Sigma^{*}$
such that, for all $i,j\in I$ there exists $\gamma\in F$ with $i\gamma j\in\Sigma^{*}$.
Set $l:=\max_{\gamma\in F}\left|\gamma\right|$ and $M:=\max_{\gamma\in F}\e^{-\inf S_{\left|\gamma\right|}\varphi_{|\left[\gamma\right]}}$.
It follows from (\ref{eq:gerltrick-2}) that, for all $t,u\in\N$
and for all $g_{1},g_{2}\in G$, we have 
\begin{align*}
\left(T_{t}T_{u}\1_{\Sigma\times\left\{ g_{1}\right\} },\1_{\Sigma\times\left\{ g_{2}\right\} }\right) & \le C^{2}\sum_{\omega_{1}\in\Sigma^{u},\omega_{2}\in\Sigma^{t}:g_{1}\Psi\left(\omega_{1}\right)\Psi\left(\omega_{2}\right)=g_{2}}\e^{\inf S_{u}\varphi_{|\left[\omega_{1}\right]}}\e^{\inf S_{t}\varphi_{|\left[\omega_{2}\right]}}\\
 & \le MC^{2}\sum_{i=0}^{l}\sum_{\omega\in\Sigma^{u+t+i}:g_{1}\Psi\left(\omega\right)=g_{2}}\e^{\inf S_{u+t+i}\varphi_{|\left[\omega\right]}}\le MC^{3}\sum_{i=0}^{l}\left(T_{u+t+i}\1_{\Sigma\times\left\{ g_{1}\right\} },\1_{\Sigma\times\left\{ g_{2}\right\} }\right).
\end{align*}
Setting $D:=MC^{3}$, we have thus shown that, for all $f_{1},f_{2}\in\mathcal{H}_{c}^{+}$,
\begin{equation}
\left(T_{t}T_{u}\left(f_{1}\right),f_{2}\right)\le D\sum_{i=0}^{l}\left(T_{u+t+i}\left(f_{1}\right),f_{2}\right).\label{eq:gerltrick-4}
\end{equation}
We now follow \cite[Proposition 3.11]{Jaerisch11a} which was motivated
by \cite{MR2338235}. Let  $n>\max\left\{ N_{n},n_{0}\right\} $.
Combining first (\ref{eq:gerltrick-1}) and (\ref{eq:gerltrick-3}),
then by applying $(2k-1)$- times the estimate in (\ref{eq:gerltrick-4}),
we obtain that
\begin{align*}
\Vert T_{n}^{*}T_{n}\Vert & \le\limsup_{k\rightarrow\infty}\left(c_{n}^{k}C^{2k}\left(\sum_{i_{1}=-N_{n}}^{N_{n}}\left(T_{n+i_{1}}T_{n}\right)\dots\sum_{i_{k}=-N_{n}}^{N_{n}}\left(T_{n+i_{k}}T_{n}\right)\right)\left(\1_{\Sigma\times\left\{ \id\right\} }\right),\1_{\Sigma\times\left\{ \id\right\} }\right)^{1/k}\\
 & \le c_{n}C^{2}\limsup_{k\rightarrow\infty}\left(\left(2N_{n}+1\right)^{k}D^{2k-1}\left(l+1\right)^{2k-1}\max_{r=-kN_{n},\dots,k(N_{n}+2l)-l}\{(T_{2nk+r}\left(\1_{\Sigma\times\left\{ \id\right\} }\right),\1_{\Sigma\times\left\{ \id\right\} })\}\right)^{1/k}.
\end{align*}
Combining the previous estimate with $\Vert T_{n}^{*}T_{n}\Vert=\Vert T_{n}\Vert^{2}$,
$\lim_{n}(c_{n})^{1/(2n)}=\alpha$, $\lim_{n}n^{-1}N_{n}=0$ and the
fact that $\limsup_{k}\left(T_{k}\1_{\Sigma\times\left\{ \id\right\} },\1_{\Sigma\times\left\{ \id\right\} }\right)^{1/k}=\e^{\mathcal{P}\left(\varphi\circ\pi_{1},\sigma\rtimes\Psi\right)}$
by Lemma \ref{lem:growth-is-spectralradius} (\ref{enu:gurevich-vs-Tn}),
we obtain that 
\[
\lim_{n\rightarrow\infty}\Vert T_{n}\Vert^{1/n}\le\alpha\limsup_{n\rightarrow\infty}\left(\max\{\e^{\mathcal{P}\left(\varphi\circ\pi_{1},\sigma\rtimes\Psi\right)(2n-N_{n})},\e^{\mathcal{P}\left(\varphi\circ\pi_{1},\sigma\rtimes\Psi\right)(2n+N_{n}+2l)}\}\right)^{1/2n}=\alpha\e^{\mathcal{P}\left(\varphi\circ\pi_{1},\sigma\rtimes\Psi\right)}.
\]
The proof is complete.\end{proof}
\selectlanguage{american}%
\begin{lem}
\label{lem:approximation}Let $\Sigma$ be a Markov shift and let
$\varphi:\Sigma\rightarrow\R$ be of medium variation. Suppose that
$\varphi$ is asymptotically $\alpha$-symmetric with respect to $\Psi$,
for some $\alpha\ge1$. Then there exists a sequence of Hölder continuous
functions $\varphi_{j}:\Sigma\rightarrow\R$ and $\left(D_{j}\right)\in\R^{\N}$,
$D_{j}\ge1$, $j\in\N$, such that $\varphi_{j}$ is asymptotically
$\alpha(D_{j})^{1/(2j)}$-symmetric, $\lim_{j}(D_{j})^{1/(2j)}=1$
and $\lim_{j}\varphi_{j}=\varphi$, where the convergence of $\left(\varphi_{j}\right)$
is uniformly on compact subsets of $\Sigma$. \end{lem}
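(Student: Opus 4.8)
The plan is to produce the $\varphi_{j}$ by replacing $\varphi$ with its infimum over $j$-cylinders. First I would set, for each $j\in\N$,
\[\varphi_{j}(\rho):=\inf_{\tau\in[\rho_{|j}]}\varphi(\tau),\qquad\rho\in\Sigma,\]
so that $\varphi_{j}$ depends only on the first $j$ coordinates and $\varphi_{j}\le\varphi$ pointwise. Applying medium variation with $n=1$ gives $\osc_{1}(\varphi)\le\log D_{1}<\infty$, where $\osc_{m}(\varphi):=\sup\{\varphi(x)-\varphi(y):x_{|m}=y_{|m}\}$; since $[\rho_{|j}]\subset[\rho_{1}]$, each $\varphi_{j}$ is finite and has oscillation $\le\osc_{1}(\varphi)$ on $1$-cylinders, and being constant on $j$-cylinders it satisfies $V_{\beta}(\varphi_{j})\le\osc_{1}(\varphi)\,\e^{\beta(j-1)}<\infty$ for every $\beta>0$, so $\varphi_{j}$ is H\"older. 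Continuity of $\varphi$ gives $\varphi_{j}\to\varphi$ uniformly on compact subsets of $\Sigma$. The single estimate I will use throughout is the pointwise bound $0\le\varphi-\varphi_{j}\le\osc_{j}(\varphi)$.

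Next I would transfer the asymptotic symmetry. Summing the pointwise bound along orbits yields, for every $\omega\in\Sigma^{k}$,
\[0\le\sup S_{k}\varphi_{|[\omega]}-\sup S_{k}(\varphi_{j})_{|[\omega]}\le k\,\osc_{j}(\varphi),\]
hence $\e^{\sup S_{k}\varphi_{|[\omega]}}\le\e^{k\,\osc_{j}(\varphi)}\e^{\sup S_{k}(\varphi_{j})_{|[\omega]}}$ and $\e^{\sup S_{k}(\varphi_{j})_{|[\omega]}}\le\e^{\sup S_{k}\varphi_{|[\omega]}}$. Feeding these into the defining inequality (\ref{eq:asympt-symmetric}) for $\varphi$ — the second on the left-hand side, the first on the right-hand side, where every length obeys $k\le n+N_{n}$ — produces (\ref{eq:asympt-symmetric}) for $\varphi_{j}$ with the same $n_{0}$ and the same $(N_{n})$, but with constants $c_{n}^{(j)}:=c_{n}\,\e^{(n+N_{n})\osc_{j}(\varphi)}$. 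As $\lim_{n}n^{-1}N_{n}=0$ and $\lim_{n}(c_{n})^{1/(2n)}=\alpha$, this gives $\lim_{n}(c_{n}^{(j)})^{1/(2n)}=\alpha\,\e^{\osc_{j}(\varphi)/2}$. Setting $D_{j}:=\e^{j\,\osc_{j}(\varphi)}\ge1$, the potential $\varphi_{j}$ is therefore asymptotically $\alpha(D_{j})^{1/(2j)}$-symmetric with respect to $\Psi$, exactly as required.

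It remains to verify $\lim_{j}(D_{j})^{1/(2j)}=1$, that is $\lim_{j}\osc_{j}(\varphi)=0$, and this is the heart of the matter. The oscillations $\osc_{j}(\varphi)$ decrease in $j$, so the point is to exclude a positive limit. Here I would use medium variation quantitatively: for $x,y\in[\omega]$ with $\omega\in\Sigma^{k}$, applying $\e^{S_{k}\varphi(x)-S_{k}\varphi(y)}\le D_{k}$ at scales $k$ and $k-1$ and subtracting controls differences at a single coordinate, while iterating along an orbit shows that a persistent single-symbol oscillation $\osc_{j}(\varphi)\ge\varepsilon$ would force $\osc_{k}(S_{k}\varphi)$ to grow linearly in $k$, contradicting $\lim_{k}(D_{k})^{1/k}=1$. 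Making this averaging uniform — so that the witnesses to $\osc_{j}(\varphi)$ can be assembled into one long cylinder on which $S_{k}\varphi$ oscillates by order $k\varepsilon$ — is the main technical obstacle. Once $\osc_{j}(\varphi)\to0$ is established the lemma follows by combining the three steps, and the convergence $\varphi_{j}\to\varphi$ is then even uniform on all of $\Sigma$.
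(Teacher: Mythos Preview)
Your construction of $\varphi_j$ and the first two steps match the paper's proof exactly, including the transfer of the asymptotic symmetry inequality. The gap is precisely where you say it is: the claim $\osc_j(\varphi)\to 0$ does \emph{not} follow from medium variation, so your pointwise estimate $S_n(\varphi-\varphi_j)\le n\,\osc_j(\varphi)$ is too crude to close the argument.

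Here is a counterexample. On the full shift over $I=\N$ take $u(x):=h(x_{x_1})$ with $h\colon\N\to[0,1]$, $h(1)=0$, $h(2)=1$, and set $\varphi:=u-u\circ\sigma$. Then $\varphi$ is locally constant, hence continuous, and $S_n\varphi=u-u\circ\sigma^n$ gives $|S_n\varphi(x)-S_n\varphi(y)|\le 2$ for all $n$ and all $x,y$ with $x_{|n}=y_{|n}$, so $\varphi$ is of medium variation with $D_n\equiv\e^{2}$. Yet for every $j\ge 2$, choosing $x_1=j+1$, $x_2=1$ and varying $x_{j+1}$ shows $\osc_j(\varphi)\ge 1$. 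Your sketched averaging argument cannot work: for a coboundary the oscillations of $\varphi$ at successive shifts telescope rather than accumulate, so one can never force $\osc_n(S_n\varphi)$ to grow linearly from a persistent $\osc_j(\varphi)\ge\varepsilon$.

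The paper circumvents this by replacing your term-by-term bound with a block estimate that uses the medium variation constants themselves: it shows
\[
\e^{S_n\varphi(x)-S_n\varphi_j(x)}\le (D_j)^{\lfloor n/j\rfloor}\bigl(\sup_{k<j}D_k\bigr),
\]
so that the multiplicative correction to $c_n$ has $(2n)$-th root tending to $(D_j)^{1/(2j)}$, and $\lim_j(D_j)^{1/(2j)}=1$ is immediate from the hypothesis $\lim_n(D_n)^{1/n}=1$. In other words, the right $D_j$ to use in the statement is the one already supplied by the definition of medium variation, not $\e^{j\,\osc_j(\varphi)}$; once you make this replacement your proof goes through without the problematic final paragraph.
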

\begin{proof}
Define $\varphi_{j}\left(x\right):=\inf\left\{ \varphi\left(y\right):y\in\left[x_{1},\dots,x_{j}\right]\right\} $,
for each $x\in\Sigma$ and $j\in\N$. Since $\varphi$ is of medium
variation, there exists a sequence $\left(D_{j}\right)\in\R^{\N}$,
$D_{j}\ge1$, such that, for all $j,n\in\N$, $\omega\in\Sigma^{n}$
and $x\in\left[\omega\right]$, 
\begin{equation}
1\le\e^{S_{n}\varphi\left(x\right)-S_{n}\varphi_{j}\left(x\right)}\le(D_{j})^{\lfloor\frac{n}{j}\rfloor}\big(\sup_{k<j}D_{k}\big),\label{eq:mediumvar}
\end{equation}
where $\lfloor u\rfloor$ denotes the largest integer not greater
than $u$. Since $\varphi$ is asymptotically $\alpha$-symmetric,
there exist $n_{0}\in\N$ and sequences $\left(c_{n}\right)\in\R^{\N}$
and $\left(N_{n}\right)\in\N^{\N}$ with the property that $\lim_{n}\left(c_{n}\right)^{1/(2n)}=\alpha$,
$\lim_{n}n^{-1}N_{n}=0$ and such that (\ref{eq:asympt-symmetric})
holds, for each $g\in G$ and for all $n\ge n_{0}$. Let $j\in\N$.
By (\ref{eq:asympt-symmetric}) and (\ref{eq:mediumvar}) we have
for each $n\in\N$ with $n>\max\left\{ N_{n},n_{0}\right\} $ and
$g\in G$, 
\begin{align*}
\sum_{\omega\in\Sigma^{n}:\Psi\left(\omega\right)=g}\e^{\sup S_{n}\varphi_{j|\left[\omega\right]}} & \le\sum_{\omega\in\Sigma^{n}:\Psi\left(\omega\right)=g}\e^{\sup S_{n}\varphi_{|\left[\omega\right]}}\le c_{n}\sum_{\omega\in\Sigma^{*}:\Psi\left(\omega\right)=g^{-1},\, n-N_{n}\le\left|\omega\right|\le n+N_{n}}\e^{\sup S_{\left|\omega\right|}\varphi_{|\left[\omega\right]}}\\
 & \le c_{n}\max_{n-N_{n}\le l\le n+N_{n}}(D_{j})^{\lfloor\frac{l}{j}\rfloor}\big(\sup_{k<j}D_{k}\big)\sum_{\omega\in\Sigma^{*}:\Psi\left(\omega\right)=g^{-1}\, n-N_{n}\le\left|\omega\right|\le n+N_{n}}\e^{\sup S_{\left|\omega\right|}\varphi_{j|\left[\omega\right]}}.
\end{align*}
 Since $\lim_{n}\big(c_{n}\max_{n-N_{n}\le l\le n+N_{n}}(D_{j})^{\lfloor\frac{l}{j}\rfloor}\left(\sup_{k<j}D_{k}\right)\big)^{1/(2n)}=\alpha(D_{j})^{1/(2j)}$,
we have that $\varphi_{j}$ is asymptotically $\alpha(D_{j})^{1/(2j)}$-symmetric.
By continuity of $\varphi_{j}$ and $\varphi$, and using that $\varphi_{j}\le\varphi_{j+1}$,
for each $j\in\N$, we have that $\lim_{j}\varphi_{j}=\varphi$ locally
uniformly by Dini's Theorem. 
\end{proof}

\selectlanguage{english}%
\begin{proof}
[Proof of Corollary $\ref{cor:amenability-symmetry-approximation}$]

\selectlanguage{american}%
First suppose that the alphabet of $\Sigma$ is finite. Then $\mathcal{P}\left(\varphi,\sigma\right)<\infty$
and $\Sigma$ satisfies  the b.i.p. property. By Lemma \ref{lem:approximation},
there exists a sequence of Hölder continuous functions $\varphi_{j}:\Sigma\rightarrow\R$,
$j\in\N$, such that $\varphi_{j}$ is asymptotically $\alpha(D_{j})^{1/(2j)}$-symmetric
and $\lim_{j}\varphi_{j}=\varphi$ uniformly. By Corollary \ref{cor:amenable-fullpressure-bip},
we have $\mathcal{P}\left(\varphi_{j}\circ\pi_{1},\sigma\rtimes\Psi\right)\ge\mathcal{P}\left(\varphi_{j},\sigma\right)-\log\big(\alpha(D_{j})^{1/(2j)}\big)$.
The claim follows by letting $j$ tend to infinity, because we have
$\lim_{j}(D_{j})^{1/(2j)}=1$, $\lim_{j}\mathcal{P}\left(\varphi_{j}\circ\pi_{1},\sigma\rtimes\Psi\right)=\mathcal{P}\left(\varphi\circ\pi_{1},\sigma\rtimes\Psi\right)$
and $\lim_{j}\mathcal{P}\left(\varphi_{j},\sigma\right)=\mathcal{P}\left(\varphi,\sigma\right)$. 

Finally, suppose that $\varphi$ is compactly asymptotically $\alpha$-symmetric
with respect to $\Psi$. For each $k\in\N$, define\foreignlanguage{english}{
$\varphi_{k}:=\varphi_{|\Sigma_{k}}$. Since $G_{k}:=\Psi\left(\Sigma_{k}^{*}\right)$
is a subgroup of the amenable group $G$, we have that $G_{k}$ is
amenable. Moreover,  $\varphi_{k}$ is }asymptotically\foreignlanguage{english}{
$\alpha$-symmetric with respect to $\Psi_{|I_{k}^{*}}$. Hence, by
the first part of the proof, we have $\mathcal{P}\left(\varphi_{k}\circ\pi_{1},\sigma\rtimes\Psi\right)\ge\mathcal{P}\left(\varphi_{k},\sigma\right)-\log\alpha$.
The result follows from the facts that $\lim_{k}\mathcal{P}\left(\varphi_{k}\circ\pi_{1},\sigma\rtimes\Psi\right)=\mathcal{P}\left(\varphi\circ\pi_{1},\sigma\rtimes\Psi\right)$
and $\lim_{k}\mathcal{P}\left(\varphi_{k},\sigma\right)=\mathcal{P}\left(\varphi,\sigma\right)$.
This was proved for} Hölder continuous functions in \cite[Theorem 2]{MR1738951},
and it is straightforward to extend the proof to functions of medium
variation.
\end{proof}
\selectlanguage{american}%
\providecommand{\bysame}{\leavevmode\hbox to3em{\hrulefill}\thinspace}
\providecommand{\MR}{\relax\ifhmode\unskip\space\fi MR }
% \MRhref is called by the amsart/book/proc definition of \MR.
\providecommand{\MRhref}[2]{%
  \href{http://www.ams.org/mathscinet-getitem?mr=#1}{#2}
}
\providecommand{\href}[2]{#2}

\end{document}